\newtheorem{theorem}{Theorem}[section]
\newtheorem{lemma}[theorem]{Lemma}
\newtheorem{corollary}[theorem]{Corollary}
\theoremstyle{definition}
\newtheorem{definition}[theorem]{Definition}
\newtheorem{example}[theorem]{Example}
\theoremstyle{remark}
\newtheorem{remark}[theorem]{Remark}
\numberwithin{equation}{section}
\begin{document}
\setcounter{page}{1}

\title[The Roots and Links in a Class of $M$-Matrices]{The Roots and Links in a Class of $M$-Matrices}

\author[X.-D. Zhang]{Xiao-Dong Zhang$^1$$^{*}$ }

\address{$^{1}$ Department of Mathematics and MOE-LSC, Shanghai Jiao Tong University, Shanghai 200240, P.R.China.}
\email{\textcolor[rgb]{0.00,0.00,0.84}{xiaodong@sjtu.edu.cn}}


\dedicatory{This paper is dedicated to Professor Tsuyoshi Ando }

\subjclass[2010]{Primary 15B48; Secondary 47B99, 60J10.}

\keywords{Inverse $M$ matrix, tree, exiting root, link.}

\date{Received: xxxxxx; Revised: yyyyyy; Accepted: zzzzzz.
\newline \indent $^{*}$ Corresponding author}

\begin{abstract}
In this paper, we discuss exiting roots of sub-kernel transient matrices
 $P$ associated with a class of $M-$ matrices which are related to  generalized ultrametric   matrices. Then the results   are  used to describe completely all links of the class of matrices in terms of structure of  the supporting tree.
\end{abstract} \maketitle

\section{Introduction}

\noindent
Let $I$ be a finite set and $|I|=n$. $U=(U_{ij}, i,j \in I)$ is
\textit{ultrametric matrix} if it is symmetric, nonnegative and satisfies
the ultrametric inequality
\begin{equation*}
U_{ij}\ge \displaystyle\min \{U_{ik}, U_{kj} \}\ \ \ {\rm for \ \
 all  } \ \ i,j,k\in I.
 \end{equation*}
 The ultrametric matrices have an  important  property that if $U$
 is nonsingular ultrametric matrix, then the inverse of $U$ is
  row and column diagonally dominant $M-$ matrix (see \cite{martinez1994} and \cite{nabben1994}).
  A construction also was given in
  \cite{nabben1994} to describe all such ultrametric matrices.
  Later, nonsymmetric ultrametric matrices were independently defined
  by McDonald, Neumann, Schneider and
Tsatsomeros
   in \cite{mcdonald1995}  and
Nabben and Varge in   \cite{nabben1995}.i.e.,  \textit{nested
block form(NBF) matrices} and \textit{generalized  ultrametric(GU)
matrices.}   After a suitable permutation, every GU matrix can be
put in NBF.   They satisfy ultrametric inequality and are
described with dyadic trees in \cite{mcdonald1995},
\cite{nabben1995} and \cite{dell2000}.  On the inequality of $M-$ matrices and inverse $M-$ matrices, Ando in \cite{ando1979} presents many nice and excellent inequalities which play an key role in the nonnegative matrix theory.  Zhang \cite{zhang2003} characterized  equality cases in Fisher, Oppenheim and Ando inequalities.  For more detail information
on inverse $M-$ and $Z-$ matrices, the reader is referred  to
 \cite{Johnson2011}, \cite{martinez2003}, \cite{martinez2004}, \cite{mcdonald1998}, \cite{zhang2004} and the references in there.
 In this paper, we follows closely the global frame work
and notation on generalized ultrametric matrices supplied by
Dellacherie, Mart\'{i}nez and Mart\'{i}n in
\cite{dell2000}.
  Recently, Nabben was motivated by the result of Fiedler in
  \cite{fiedler2000} and defined a  new class of matrices:
  $\mathcal{U}$-matrices (see Section 2) which satisfy
  ultrametric inequality and are related to GU matrices. There is
  a common characterization in these matrices that if they are nonsingular, then
  their inverses are column diagonally dominant $M-$matrices.
 For each $\eta\ge \eta(U)=\displaystyle\max\{ (U^{-1})_{ii},\  i\in
 I\}$, define matrix $P=E-\eta^{-1}U^{-1},$ where $E$ is
 the identity matrix. Then $P$ is
 sub-Markov kernel: $P_{ij}\ge 0,$ for all $i,j\in I$ and
 ${\textbf{1}}^tP\le {\textbf{1}}^t$ (entry-wise), where $\textbf{1}$ is the column vector of all ones.
 Therefore
  $$ \eta U=(E-P)^{-1}=\sum_{m\ge 0}P^m $$
  and $U$ is proportional to the potential matrix associated to the
  transient kernel $P$. Since $P_{ij}>0$ if and only if
  $(U^{-1})_{ij} < 0$ for $i\neq j$, the existence of links
  between different points does not depend on $\eta$, while the
  condition $P_{ii}>0$ depends on the value of $\eta$. Define the
  potential vector $\mu=\mu_U$ associated to $U$ by
  $\mu:=U^{-1}\textbf{1}$ and its total mass
  $\overline{\mu}:=\textbf{1}^t\mu$. Note that the following
  equivalence holds
  \begin{equation*}
  \mu_i>0 \Leftrightarrow (U^{-1}\textbf{1})_i>0 \Leftrightarrow
  (P\textbf{1})_i<1.
  \end{equation*}
  Every $i$ satisfying this property is called  an {\it exiting root} of
  $U$(or of $P$). The set of them is denoted by
  $\mathcal{R}:={\mathcal{R}}_U$. The {\it potential vector} $\nu:=\nu_U$
  associated to $U^t$ is given by $\nu:=(U^t)^{-1}\textbf{1}$ and
  $\overline{\nu}:=\textbf{1}^t\nu$. Notice that $
  \overline{\mu}=\overline{\nu}$, since
  $\textbf{1}^t\nu=\textbf{1}^t(U^t)^{-1}
  \textbf{1}=\textbf{1}^t\mu.$

  Our main results in this paper are to
  characterize the following properties (which do not depend on
  $\eta$)  `` $i$ is a exiting root of $P$ and $P^t$"; and `` link of $P$, i.e.,
   for a given couple
  $i\neq j,$ whether $P_{ij}> 0$ for $ \mathcal{U}$-matrices". These
  properties and other related problems were completely investigated  for
symmetric ultrametric matrices and  GU matrices in \cite{dell1996}
and
  \cite{dell2000}, respectively. In Section 2, we revisit $ \mathcal{U}$-
  matrices by means of dyadic tree and give some preliminary
  results which are very useful. In Section 3, we describe exiting root of $P$ and $P^t$ with
  associated trees. In Section 4, we characterize completely the links of $P$.

\section{$ \mathcal{U} $ matrices}

 \noindent A {\it tree} $(T,\mathcal{J})$ is a finite unoriented and connected
 acyclic
 graph. For $(t,s)\in T\times T, \ t\neq s$, there is a unique
 path geod$(t,s)$ of minimum length, which is called the {\it
 geodic}
 between $t$ and $s$, while geod$(t,t)=\{t\}$ which is of length
 $0$.  Sometime, we use geod$(t,s)$ to  stand for its edge set.  Fixed $r\in T,$ we call it the {\it  root} of tree $T$. If $s\in
 geod(t,r)$, we denote $s\preceq  t$, which is a partial
 order relation on $T$. For $s, t\in T$, $s\wedge t=\displaystyle
 \sup \{v, v\in geod(s,r)\cap geod(t,r)\}$ denotes the closest
 common {\it ancestor} of $s$ and $t$. The set of {\it successors} of $t$ is
 $s(t)=\{s\in T, \ s \succeq t,\ (s,t)\in \mathcal{J}\}$.
 Then $I({\mathcal{J}})=\{i\in T, \ s(i)=\emptyset\}$ is the set of
{\it  leaves} of the tree $T$. A tree is said {\it  dyadic} if
$|s(t)|=2$ for
 $t\notin I({\mathcal{J}})$. The successors of $t$ are denoted by
 $t^-$ and $t^+$.  For $t\in T$, the set $L(t):=\{i\in I( { \mathcal{J}}), t\in
 geod(i,r)\}$ completely  characterizes $t$. Hence we can identify $t$ and
 $L(t)$. In particular, $r$ is identified with $L(r)=I(
 {\mathcal{J}})$ and $i\in I( { \mathcal{J}})$ with the singleton
 ${i}$. Hence we can assume that each vertex of $T$ is a subset of
 the set of leaves $I({ \mathcal{J}})$. The distinction between the
 roles of $L$, as $L\in T $ (mean that $L$ is a vertex of tree $T$) and
 $L\subseteq I$ (mean that $L$ is regarded as the subset of $I( { \mathcal{J}})$
  corresponding to the vertex of $T$), will be clear in
 the context when we use them.
By the above notations and definition of $GU$ matrices in
\cite{dell2000}, The definition of   $\mathcal{U}$ matrices in
\cite{nabben2000} may be restated in the following way

\begin{definition} $U=(U_{ij}: i,j\in I)$ is a
$\mathcal{U}$ matrix if there exists a dyadic tree
$(T,{\mathcal{J}})$ with fixed a root $r$ and a leaf $n\in I$, and
nonnegative real vectors $
\overrightarrow{{\mathbf{\alpha}}}=(\alpha_t: t\in T)$, $
\overrightarrow{{ \mathbf{\beta} }}=(\beta_t:t\in T)$ satisfying

{\rm{(i).}}  $I=I( \mathcal{J})$,
$\overrightarrow{{\mathbf{\alpha}}}|_I=\overrightarrow{{
\mathbf{\beta} }}|_I ;$ and $\alpha_t=\alpha_{t\wedge n}$ for
$t\in r^+, t\notin I$;

{\rm{(ii).}} $\alpha_t\le \beta_t$ for $t\in T$;

{\rm{(iii).}} $\overrightarrow{{\mathbf{\alpha}}}$ and
$\overrightarrow{{ \mathbf{\beta} }}$ are $\preceq$- increasing,
i.e., $t\preceq s$ implies $\alpha_t\le \alpha_s$ and $\beta_t\le
\beta_s$;

{\rm{(iv).}} $t^+\in geod(r,n)$ for $t\in geod(r,n)$ and $t\neq
n$; and $\alpha_t=\beta_t$ for $t\in geod(r,n)$.

{\rm{(v). }} $U_{ij}=\alpha_t $ if $(i,j) \in (t^-,t^+)$ and
$U_{ij}=\beta_s$ if $(i,j)\in (t^+,t^-)$, where $t=i\wedge j$ and
$s=\displaystyle\max\{ i\wedge j, i\wedge n \}$;
$U_{ii}=\alpha_i=\beta_i$ for $i\in I$.

 \noindent We say that $(T,\mathcal{J})$ support $U$ and $U$ is  $\mathcal{U}$ associated with tree
 $(T,\mathcal{J})$.

\end{definition}

 It is easy to show  that this definition is equivalent to
Definition 2.1 in   \cite{nabben2000}. Observe that for each $L\in
T$, the matrix  $U|_{L\times L}$ is  either  GU or $ \mathcal{U} $
matrix, where the GU matrix consistent with the definition of GU
matrix in \cite{dell2000}. The
  tree supporting it, denoted by $(T|_L,{\mathcal{J}}_L)$, is the
  restriction of $(T,\mathcal{J})$ on  $L$ and the associated vectors which
   are the restrictions of
  $\overrightarrow{{\mathbf{\alpha}}}$ and $\overrightarrow{{
\mathbf{\beta} }}$ on $T|_L$. In other words,
$(T|_L,{\mathcal{J}}_L)$ is the subtree of $(T,\mathcal{J})$ with
the root $L$ and the leaves set $L$. The potential vectors and the
exiting roots of $U|_L, U^t|_L$ are denoted, respectively by
 $\mu_L,\ \nu_L,\ {\mathcal{R}}_L, \ {{ \mathcal{R}}^t}_L$. The
 sub-kernel corresponding to $U|_L, U^t|_L$ is denoted by $P^L$
 , $(P^t)^L$.
 If $U$ is nonsingular $\mathcal{U}$ matrix, it can be shown that
 $U|_L$ is also nonsingular GU or $ \mathcal{U}$ matrix by Schur
 decomposition and inductive argument.

We now introduce the following relation $ \leq_{\mathcal{J}}$ in
the set of leaves $I$.  For $i\neq j$, we say $i <_{\mathcal{J}}
j$ if $i\in t^-, j\in t^+$ with $t=i\wedge j$.
 Assume that $I=\{1,2,\cdots,n \}$. By permuting I, we can suppose
 $\leq_{\mathcal{J}}$ is the usual relation $\leq$ on $I$.
  Therefore, we will assume that this is standard presentation of
  $\mathcal{U}$ matrices in this paper. In the other words, Let $U\in \mathcal{U}$ and
  $I=I^-\cup I^+.$ Denote $J:=I^-$ and $K:=I^+$.
   Thus
      $$U=\left(\begin{array}{cc}
   U_J & \alpha_I\textbf{1}_J\textbf{1}_K^t\\
   b_K\textbf{1}_J^t& U_K
   \end{array}\right),$$
 where  $\alpha_I=\displaystyle \min\{ U_{ij}: i,j\in I\}$ and
 $b_K=U_Ke_K$ with $e_K=(0,\cdots,0, 1)^t$ unit vector, i.e., $b_K$
 is the last column of $U_K$. Note that in here, $U_J$ is GU matrix and  $U_K$ is still
$\mathcal{U}$ matrix also,  which has a similar $2\times 2$ block
structure, and its the first diagonal block is a special GU
matrix.
 We begin  with the following theorem in which we  re-prove some
 known result in \cite{nabben2000}.
\begin{theorem}\cite{nabben2000}
\label{schur decomposition} If  $U$ is nonsingular $ \mathcal{U}$
matrix, then

{\rm{(i).}} $\alpha_I\overline{\mu}_J< 1$ and
$$U^{-1}=\left(\begin{array}{rr}
C& D\\
E& F
\end{array}
\right),$$ where

\begin{eqnarray*} C=&
 U_J^{-1}+ \frac{\alpha_I}{1-\alpha_I\overline{\mu}_J}\mu_J\nu_J^t,
  \ \ \ & D=\frac{-\alpha_I}{1-\alpha_I\overline{\mu}_J}\mu_J\nu_K^t,\\
 E=& \frac{-1}{1-\alpha_I\overline{\mu}_J}e_K\nu_J^t,
\ \ \ &F=U_K^{-1}+\frac{\alpha_I \overline{\mu}_J
}{1-\alpha_I\overline{\mu}_J}e_K\nu_K^t. \end{eqnarray*}

{\rm{(ii)}}.
$$\mu_I=\left(\begin{array}{c}
\frac{1-\alpha_I\overline{\mu}_K}{1-\alpha_I\overline{\mu}_J}\mu_J\\
\mu_K-\frac{\overline{\mu}_J(1-\alpha_I\overline{\mu}_K)}{1-\alpha_I\overline{\mu}_J}e_K
\end{array}\right); \ \ \ \nu_I=\left(\begin{array}{l}
0\\
\nu_K
\end{array}
\right).$$

{\rm{ (iii).}} $ \overline{\mu}_I=\overline{\mu}_K$.

{\rm{(iv):}} $(\mu_I)_i\ge 0$ for $ i=1,2,\cdots,n-1.$

 {\rm{ (v)}}. $(\nu_I)_i=0$ for
 $i=1,2,\cdots, n-1$; and $(\nu_I)_n=\overline{\mu}_I=\frac{1}{U_{nn}}. $
\end{theorem}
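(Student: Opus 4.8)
The plan is to establish all five parts at once by induction on $n=|I|$, using throughout the displayed $2\times 2$ block form of $U$. The base case $n=1$ is trivial, since then $U=(U_{nn})$ with $U_{nn}>0$ and $\mu_I=\nu_I=(1/U_{nn})$, so fix $n\ge 2$ and assume the theorem for all nonsingular $\mathcal{U}$ matrices on fewer leaves (and, via \cite{dell2000}, for all nonsingular GU matrices on fewer leaves); recall from the remarks preceding the theorem that $U_J=U|_J$ is a nonsingular GU matrix and $U_K=U|_K$ a nonsingular $\mathcal{U}$ matrix whose distinguished leaf is again $n$. For part (i) I would begin with the Schur complement of the leading block: using $b_K=U_Ke_K$ and the scalar identity $\textbf{1}_J^tU_J^{-1}\textbf{1}_J=\overline{\mu}_J$, it simplifies to
\[
S\;=\;U_K-(b_K\textbf{1}_J^t)\,U_J^{-1}\,(\alpha_I\textbf{1}_J\textbf{1}_K^t)\;=\;U_K-\alpha_I\overline{\mu}_J\,b_K\textbf{1}_K^t\;=\;U_K\bigl(E_K-\alpha_I\overline{\mu}_J\,e_K\textbf{1}_K^t\bigr),
\]
with $E_K$ the identity on $K$; since $\textbf{1}_K^te_K=1$ this gives $\det S=\det U_K\,(1-\alpha_I\overline{\mu}_J)$. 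As $U$, $U_J$, $U_K$ are nonsingular matrices whose inverses are $M$-matrices, all three determinants are positive, so $\det U=\det U_J\cdot\det S$ forces $\alpha_I\overline{\mu}_J<1$. Then Sherman--Morrison yields $S^{-1}=\bigl(E_K+\frac{\alpha_I\overline{\mu}_J}{1-\alpha_I\overline{\mu}_J}e_K\textbf{1}_K^t\bigr)U_K^{-1}$, hence $S^{-1}b_K=\frac{1}{1-\alpha_I\overline{\mu}_J}e_K$, $\textbf{1}_K^tS^{-1}=\frac{1}{1-\alpha_I\overline{\mu}_J}\nu_K^t$ and $\textbf{1}_K^tS^{-1}b_K=\frac{1}{1-\alpha_I\overline{\mu}_J}$; feeding these, together with $U_J^{-1}\textbf{1}_J=\mu_J$ and $\textbf{1}_J^tU_J^{-1}=\nu_J^t$, into the usual block-inversion formula reproduces the stated $C,D,E,F$. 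I expect this to be the only genuinely non-routine step: the determinant identity by itself only gives $\alpha_I\overline{\mu}_J\ne 1$ from nonsingularity, and it is precisely the structural input ``the inverse of a nonsingular $\mathcal{U}$ (or GU) matrix is an $M$-matrix, hence has positive determinant'' that upgrades this to the strict inequality.

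Parts (ii) and (iii) are then a direct computation. Multiplying $U^{-1}$ and $(U^{-1})^t$ on the right by $\textbf{1}_I$ and simplifying the four resulting blocks using $U_J^{-1}\textbf{1}_J=\mu_J$, $U_K^{-1}\textbf{1}_K=\mu_K$, $\nu_J^t\textbf{1}_J=\mu_J^t\textbf{1}_J=\overline{\mu}_J$, $\nu_K^t\textbf{1}_K=\overline{\mu}_K$ and $\textbf{1}_K^te_K=e_K^t\textbf{1}_K=1$ gives the asserted expressions for $\mu_I$ and $\nu_I$; in particular the $J$-block of $\nu_I$ collapses to $0$ and its $K$-block is $\nu_K$. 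Part (iii) is then immediate, since $\overline{\mu}_I=\overline{\nu}_I=\textbf{1}_I^t\nu_I=\textbf{1}_K^t\nu_K=\overline{\nu}_K=\overline{\mu}_K$.

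For (v) and (iv) the induction hypothesis enters. From the block form of $\nu_I$ one has $(\nu_I)_i=0$ for $i\in J$ and $(\nu_I)_i=(\nu_K)_i$ for $i\in K$; applying part (v) to $U_K$ gives $(\nu_K)_i=0$ for $i\in K\setminus\{n\}$ and $(\nu_K)_n=\overline{\mu}_K=1/U_{nn}$, so together with (iii) this yields $(\nu_I)_i=0$ for $i=1,\dots,n-1$ and $(\nu_I)_n=\overline{\mu}_I=1/U_{nn}$, proving (v). Finally, in the formula for $\mu_I$ the correction term is a scalar multiple of $e_K$ and hence affects only coordinate $n$: so the coordinates of $\mu_I$ indexed by $K\setminus\{n\}$ coincide with those of $\mu_K$ and are $\ge 0$ by part (iv) for $U_K$, while the first $|J|$ coordinates equal $\frac{1-\alpha_I\overline{\mu}_K}{1-\alpha_I\overline{\mu}_J}\,\mu_J$, in which $1-\alpha_I\overline{\mu}_J>0$ by (i), $\mu_J\ge 0$ by nonnegativity of the potential of the nonsingular GU matrix $U_J$ (\cite{dell2000}), and $1-\alpha_I\overline{\mu}_K\ge 0$ because $\overline{\mu}_K=1/U_{nn}$ and $\alpha_I=\min_{i,j}U_{ij}\le U_{nn}$. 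This gives $(\mu_I)_i\ge 0$ for $i=1,\dots,n-1$ and closes the induction.
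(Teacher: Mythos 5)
Your overall architecture --- the $2\times 2$ block form, the Schur complement $S=U_K\bigl(E_K-\alpha_I\overline{\mu}_J e_K\textbf{1}_K^t\bigr)$, block inversion via Sherman--Morrison, and induction for (iv) and (v) --- matches the paper's, and the computations for (ii)--(v) all check out. The one step where you genuinely diverge is the strict inequality $\alpha_I\overline{\mu}_J<1$, and there your argument is circular. From $\det U=\det U_J\,\det U_K\,(1-\alpha_I\overline{\mu}_J)$, nonsingularity alone gives only $\alpha_I\overline{\mu}_J\neq 1$, as you acknowledge; you fix the sign by asserting that $U$, $U_J$, $U_K$ all have $M$-matrix inverses and hence positive determinants. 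For $U_J$ (a nonsingular GU matrix) and for $U_K$ (smaller $\mathcal{U}$ matrix) this is legitimate external or inductive input, but for $U$ itself the claim ``$U^{-1}$ is an $M$-matrix'' is precisely the main theorem of \cite{nabben2000}, whose proof runs through this very Schur decomposition and in particular through $1-\alpha_I\overline{\mu}_J>0$. So the key step assumes the conclusion of the theory being re-derived. Nor can you repair it by adding ``$\det>0$'' to your induction hypothesis: closing that induction at stage $I$ would again require $1-\alpha_I\overline{\mu}_J>0$ first.

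The paper gets the strict inequality from information about the \emph{proper submatrix} $U_J$ only: by Theorem 3.6 of \cite{mcdonald1995}, $\alpha_J\overline{\mu}_J\le 1$ for a nonsingular GU matrix, with equality only if $U_J$ has a row constantly equal to the minimal entry; since $\alpha_I\le\alpha_J$ this gives $\alpha_I\overline{\mu}_J\le 1$, and equality would produce a row of $U$ constantly equal to $\alpha_I$ (the off-diagonal block being $\alpha_I\textbf{1}_J\textbf{1}_K^t$), which is proportional to the constant last row of $U$ (all entries $U_{nn}$), contradicting nonsingularity. Substituting that argument for your determinant-sign step makes the proof self-contained; everything else in your write-up stands. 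A minor stylistic difference: for (v) you induct through $U_K$, whereas the paper observes $e_I^tU=U_{nn}\textbf{1}^t$ and reads off $\nu_I=\frac{1}{U_{nn}}e_I$ directly; both are fine.
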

\begin{proof}
Since $U$ is nonsingular, $U_J$ is nonsingular  GU matrix. By
Theorem 3.6(i) in \cite{mcdonald1995},
$\alpha_J\overline{\mu}_J\le 1$, where $\alpha_J$ is smallest
entry in $U_J$. Hence  $  \alpha_I\overline{\mu}_J\le
\alpha_J\overline{\mu}_J\le 1$. Suppose that
$\alpha_I\overline{\mu}_J= 1$, by theorem 3.6(ii) in
\cite{mcdonald1995}, $U_J$ has a row whose entries are all equal
to $\alpha_I$. Noting that the last row whose entries are equal to
$U_{nn},$ there  are two rows which are proportional, which
implies $U$ is singular, a contradiction. Therefore
$\alpha_I\overline{\mu}_J< 1$.
 By Schur decomposition and the inverse of matrix formula,
it is not difficult to show that the rest of (i) holds. Since
$\overline{\mu}_J=\overline{\nu}_J$ and
$\overline{\mu}_K=\overline{\nu}_K$,
\begin{eqnarray*}
C\textbf{1}_J+D\textbf{1}_K &=&
U_J^{-1}\textbf{1}_J+\frac{\alpha_I}{1-\alpha_I\overline{\mu}_J}\mu_J\nu_J^t\textbf{1}_J
+\frac{-\alpha_I}{1-\alpha_I\overline{\mu}_J}\mu_J\nu_K^t\textbf{1}_K\\
&=&\frac{1-\alpha_I\overline{\mu}_K}{1-\alpha_I\overline{\mu}_J}\mu_J,
\end{eqnarray*}
\begin{eqnarray*}
E\textbf{1}_J+F\textbf{1}_K &=&
\frac{-1}{1-\alpha_I\overline{\mu}_J}e_K\nu_J^t\textbf{1}_J+
U_K^{-1}\textbf{1}_K+\frac{\alpha_I \overline{\mu}_J
}{1-\alpha_I\overline{\mu}_J}e_K\nu_K^t\textbf{1}_K\\
&=&
\mu_K-\frac{\overline{\mu}_J(1-\alpha_I\overline{\mu}_K)}{1-\alpha_I\overline{\mu}_J}e_K,
\end{eqnarray*}
\begin{eqnarray*}
\textbf{1}_J^tC+\textbf{1}_K^tE&=&
\textbf{1}_J^tU_J^{-1}+\frac{\alpha_I}{1-\alpha_I\overline{\mu}_J}
\textbf{1}_J^t\mu_J\nu_J^t+ \textbf{1}_K^t
\frac{-1}{1-\alpha_I\overline{\mu}_J}e_K\nu_J^t \\
&=& 0,
\end{eqnarray*}
\begin{eqnarray*}
\textbf{1}_J^tD+\textbf{1}_K^tF&=&
\frac{-\alpha_I}{1-\alpha_I\overline{\mu}_J}\textbf{1}_J^t\mu_J\nu_K^t+
\textbf{1}_K^t U_K^{-1}+\textbf{1}_K^t\frac{\alpha_I
\overline{\mu}_J }{1-\alpha_I\overline{\mu}_J}e_K\nu_K^t \\
&=& \nu_K^t.\end{eqnarray*}
 So (ii) holds. Furthermore,
$$ \overline{\mu}_I=
\frac{1-\alpha_I\overline{\mu}_K}{1-\alpha_I\overline{\mu}_J}\overline{\mu}_J+
\overline{\mu}_K-\frac{\overline{\mu}_J(1-\alpha_I\overline{\mu}_K)}{1-\alpha_I\overline{\mu}_J}
=\overline{\mu}_K.$$ Thus (iii) holds. Since
$\frac{1}{U_{nn}}e_I^tU ={\textbf{1}^t}$,
  $\nu_I=\frac{1}{U_{nn}}e_I$ which implies
$\overline{\mu}_I=\overline{\nu}_I=\frac{1}{U_{nn}}$.  By (iii),
we have $1-\alpha_I\overline{\mu}_K=1-\alpha_I\overline{\mu}_I\ge
1-\frac{\alpha_I}{U_{nn}}\ge 0$. Hence   it is easy  to show that
(iv) and (v) hold
 by using the induction on the dimension of $U$.
\end{proof}

\section{Exiting roots of P}

\noindent In order to characterize the exiting roots of $P,$ we introduce
some notations and symbols. Let $U$ be a $ \mathcal{U}$ matrix
with supporting tree $(T, \mathcal{J})$ and fixed a root $r$ and a
leaf $n$ . For $i\in I( \mathcal{J}),$ denote by $ N_i^+=\{ L\in
T: L\preceq i,\ \alpha_L=\alpha_i\}$  and $ N_i^-=\{ L\in T:
L\preceq i,\ \beta_L=\beta_i\}$. Now we can construct the set
$\Gamma^t$: for $L\notin geod(r,n),$  $(L,L^-)\in \Gamma^t$ if and
only if there exists a $i\in L^+$ such that $L\in N_i^-; $
$(L,L^+)\in \Gamma^t$ if and only if there exists a $i\in L^-$
such that $L\in N_i^+ $. For $L\in geod(r,n),$ $(L,L^-)\in
\Gamma^t$ and $(L,L^+)\notin \Gamma^t$.
 \begin{theorem}
 \label{root of transpose of P}
Let $U$ be nonsingular $ \mathcal{U}$ . Then

{\rm{(i)}}. $ {\mathcal{R}}_I^t=\{n \}.$

{\rm{(ii).}} For $L\in T$, $i\in {\mathcal{R}}_L^t $ if and only
if $ geod (i,L)\cap \Gamma^t=\emptyset.$

 \end{theorem}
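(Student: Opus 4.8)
The plan is to prove the two parts by simultaneous induction on the number of leaves $|I|$, using the $2\times 2$ block decomposition $I=J\cup K$ (with $J=I^-$, $K=I^+$) and the explicit formulas for $\mu_I,\nu_I$ supplied by Theorem \ref{schur decomposition}. For part (i), I would simply read off from Theorem \ref{schur decomposition}(v) that $(\nu_I)_i=0$ for $i=1,\dots,n-1$ and $(\nu_I)_n=1/U_{nn}>0$; since $\mathcal{R}_I^t=\{i:(\nu_I)_i>0\}$ by the definition of exiting roots applied to $U^t$, this gives $\mathcal{R}_I^t=\{n\}$ at once. This also serves as the base case of the induction (a single leaf $t=r=n$, where trivially $\mathrm{geod}(n,n)\cap\Gamma^t=\emptyset$ and $n$ is the unique exiting root).

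For part (ii), the key observation is that for any vertex $L\in T$, the matrix $U|_{L\times L}$ is again a GU or $\mathcal{U}$ matrix supported on the subtree $(T|_L,\mathcal{J}_L)$ rooted at $L$ (with distinguished leaf the appropriate one inherited from $n$), so it suffices to prove the characterization for $L=r$, i.e. to show: for a leaf $i\in I$, $i\in\mathcal{R}_I^t$ iff $\mathrm{geod}(i,r)\cap\Gamma^t=\emptyset$. By part (i) this reduces to showing that $\mathrm{geod}(i,r)\cap\Gamma^t=\emptyset$ holds exactly when $i=n$. I would argue this directly from the construction of $\Gamma^t$: for $L\in\mathrm{geod}(r,n)$ we always have $(L,L^-)\in\Gamma^t$, and since $\alpha_t=\beta_t$ along $\mathrm{geod}(r,n)$ one checks $L\in N_i^-$ and $L\in N_i^+$ for the relevant children, so any geodesic from a leaf $i\ne n$ to $r$ must at some vertex $L\in\mathrm{geod}(r,n)$ branch away via an edge $(L,L^-)$ or $(L,L^+)$ lying in $\Gamma^t$; conversely $\mathrm{geod}(n,r)\subseteq\mathrm{geod}(r,n)$ consists only of edges $(L,L^+)$ which by construction are \emph{not} in $\Gamma^t$.

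The inductive step handles a general vertex $L$ by splitting on whether the leaf $i$ lies in $L^-$ or $L^+$. Using Theorem \ref{schur decomposition}(ii) applied to the subtree rooted at $L$: the formula $\nu_L=\bigl(\begin{smallmatrix}0\\ \nu_{L^+}\end{smallmatrix}\bigr)$ shows no leaf of $L^-$ can be an exiting root of $U^t|_{L\times L}$, which matches the fact that for every $i\in L^-$ the edge $(L,L^+)$ lies on $\mathrm{geod}(i,L)$ and, when $L\notin\mathrm{geod}(r,n)$, belongs to $\Gamma^t$ precisely because of the defining condition ``$\exists i\in L^-$ with $L\in N_i^+$'' — here the delicate point is to verify $L\in N_i^+$ means $\alpha_L=\alpha_i$, i.e. that the common value persists, which follows from monotonicity (iii) in the definition together with the structure of the tree. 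For $i\in L^+$ one restricts attention to the subtree rooted at $L^+$ (itself a $\mathcal{U}$ matrix, by the remark after Definition 2.1) and applies the inductive hypothesis, noting that $\mathrm{geod}(i,L)=\{(L,L^+)\}\cup\mathrm{geod}(i,L^+)$ and $(L,L^+)\in\Gamma^t$ iff some $j\in L^-$ has $L\in N_j^+$; one must check that this edge condition is compatible with $i$ remaining (or failing to remain) an exiting root after passing from $L^+$ to $L$, which is governed by the scalar factor $\tfrac{1-\alpha\overline{\mu}_{L^+}}{1-\alpha\overline{\mu}_{L^-}}$ appearing in Theorem \ref{schur decomposition}(ii) being strictly positive — guaranteed by part (i) of that theorem.

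I expect the main obstacle to be the bookkeeping in the inductive step: correctly matching the three cases in the construction of $\Gamma^t$ (the two cases for $L\notin\mathrm{geod}(r,n)$ and the forced case for $L\in\mathrm{geod}(r,n)$) against the sign behaviour of the entries of $\mu_L$ and $\nu_L$ as one ascends the tree, and in particular verifying that the conditions ``$L\in N_i^+$'' and ``$L\in N_i^-$'' are exactly the obstructions that kill the positivity of the relevant potential-vector entry. Everything else is either a direct appeal to Theorem \ref{schur decomposition} or a routine tree-combinatorial argument about geodesics decomposing at branch vertices.
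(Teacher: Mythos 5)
Your part (i) and your treatment of the root case $L=I$ of part (ii) coincide with the paper's argument: read $\mathcal{R}_I^t=\{n\}$ off Theorem \ref{schur decomposition}(v), and check combinatorially that for $i\neq n$ the edge $(i\wedge n,(i\wedge n)^-)$ lies on $\mathrm{geod}(i,I)$ and belongs to $\Gamma^t$, while $\mathrm{geod}(n,I)$ uses only edges $(M,M^+)$ with $M\in\mathrm{geod}(r,n)$, which are excluded from $\Gamma^t$ by construction. That part is fine.

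The gap is in your reduction of the general case to the root case. A vertex $L$ satisfies ``$U|_{L\times L}$ is a $\mathcal{U}$ matrix'' only when $n\in L$, i.e.\ $L\in\mathrm{geod}(r,n)$; for every other $L$ (in particular every $L\subseteq I^-$), $U|_{L\times L}$ is merely a GU matrix. For such $L$, part (i) and the formula $\nu_L=(0,\nu_{L^+})^t$ from Theorem \ref{schur decomposition} simply do not apply: $\mathcal{R}_L^t$ is in general not a singleton, and leaves of $L^-$ can perfectly well be exiting roots of $(U|_L)^t$. So the statement ``it suffices to prove the characterization for $L=r$ and then invoke part (i)'' silently discards exactly the vertices where the theorem has nontrivial content, and your inductive step cannot repair this because an induction running over $\mathcal{U}$ matrices never produces the GU case as a smaller instance. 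The paper closes this case by importing the corresponding characterization for GU matrices (Theorem 3 of \cite{dell2000}) for $L\subseteq I^-$, using the induction hypothesis only for $L\subseteq I^+$, and handling $L=I$ directly; some such external input for the GU subtrees is unavoidable. Two further slips in your inductive step point to the same confusion: for $i\in L^-$ the edge on $\mathrm{geod}(i,L)$ incident to $L$ is $(L,L^-)$, not $(L,L^+)$; and for $L\notin\mathrm{geod}(r,n)$ membership of $(L,L^-)$ or $(L,L^+)$ in $\Gamma^t$ is a genuinely conditional statement (existence of a leaf in the \emph{opposite} child with matching $\alpha$- or $\beta$-value), not something that holds ``precisely because of the defining condition.''
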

 \begin{proof}
 (i) follows from Theorem \ref{schur decomposition} (v).  We prove the assertion (ii) by
 using an induction on the dimension $n$  of $U$. It is trivial for
 $n=1,2$. Assume that
 $$U=\left(\begin{array}{cc}
   U_J & \alpha_I\textbf{1}_J\textbf{1}_K^t\\
   b_K\textbf{1}_J^t& U_K
   \end{array}\right).$$
    If $L\subseteq I^-$, then $U|_L=(U_J)|_L$ is  GU matrix. By
    Theorem 3 in \cite{dell2000} and $(r,r^-)\notin geod(i,L),$
    the assertion (ii) holds. If $L\subseteq I^+$, then $U|_L=(U_K)|_L$
    is still $\mathcal{U}$  matrix. By the hypothesis
and $(r,r^+)\notin geod(i,L)$, $i\in {\mathcal{R}}_L^t $ if and
only if $ geod (i,L)\cap \Gamma^t=\emptyset.$ If $L=I$, then for
$i\neq n$, $(i\wedge n, (i\wedge n)^-)\in \Gamma^t$ and
$geod(i,L)\cap \Gamma^t\neq \emptyset$; for $i=n$, $ geod(i,L)\cap
\Gamma^t=\emptyset.$ Hence by (i), $L\in T$, $i\in
{\mathcal{R}}_L^t $ if and only if $ geod (i,L)\cap
\Gamma^t=\emptyset.$
 \end{proof}
   In order to describe the exiting of $P$, we  construct the set
   $\Gamma$:  For each $L\in T$, $ (L,L^-) \in \Gamma$,  $ (L,L^+) \in \Gamma$, if and only
   if there exists  $i\in L^+$, $i\in L^-$, such that $L\in
   N_i^+$, $L\in N^-$, respectively.
   \begin{theorem}
   \label{ root of  P}
    Let $U=(U_{ij}, i,j\in I)$ be  a nonsingular  $\mathcal{U}$
    matrix. Then

    {\rm{(i).}} $n\neq i\in \mathcal{R}$ if and only if
    $geod(i,I)\cap \Gamma=\emptyset$.

{\rm{(ii)}}. If $L\in T$, then $n\neq i\in {\mathcal{R}}_L$ if and
only if
    $geod(i,I)\cap \Gamma=\emptyset$.
\end{theorem}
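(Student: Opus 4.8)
The plan is to prove parts (i) and (ii) together by induction on the order $n$ of $U$, mirroring the proof of Theorem~\ref{root of transpose of P}. For $n=1,2$ the statement is trivial. For $n\ge 3$ I would write $U$ in its standard $2\times 2$ block form with $J=I^-$, $K=I^+$, so that $U_J$ is a nonsingular GU matrix and $U_K$ a nonsingular $\mathcal U$ matrix, both of order less than $n$, and then argue according to the position of the vertex $L$. If $L\subseteq J$, then $U|_L$ is a GU submatrix and the assertion follows from the corresponding GU result in \cite{dell2000} together with the fact that $geod(i,L)$ lies entirely inside the subtree $T|_L$ and hence avoids the edge $(r,r^-)$; if $L\subseteq K$, then $U|_L=U_K|_L$ and the assertion follows from the inductive hypothesis applied to $U_K$; the substantive case is $L=I$.

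Two preliminary observations feed all three cases. First, $\Gamma$ is \emph{local}: the analogous set $\Gamma_L$ built from $U|_L$ equals $\Gamma$ intersected with the edge set of $T|_L$, because the defining conditions $\alpha_{L'}=\alpha_i$ and $\beta_{L'}=\beta_i$ refer only to $L'$ and to a leaf $i$ lying below $L'$. Consequently, for $i\in J$ the set $geod(i,I)\cap\Gamma$ is $geod(i,J)\cap\Gamma_J$ together with $\{(r,r^-)\}\cap\Gamma$, and symmetrically for $i\in K$ with the edge $(r,r^+)$. Second — and this is the step I expect to be the main obstacle — the \emph{boundary lemma}: for a nonsingular $\mathcal U$ matrix one always has $(r,r^+)\notin\Gamma$, while $(r,r^-)\in\Gamma$ if and only if $U_{nn}=\alpha_I$, equivalently $1-\alpha_I\overline{\mu}_K=0$. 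The crux of its proof is the following row-rigidity fact: if a leaf $i\ne n$ satisfied $U_{ii}=\alpha_i=\alpha_I$, then, since $\alpha_I$ is the least entry of $U$ and $\overrightarrow{\alpha},\overrightarrow{\beta}$ are $\preceq$-increasing, one gets $\alpha_I\le U_{ij}\le U_{ii}=\alpha_I$ for every $j$, so the $i$-th row of $U$ is identically $\alpha_I$; but by Theorem~\ref{schur decomposition}(v) the $n$-th row of $U$ is identically $U_{nn}$, so rows $i$ and $n$ are proportional, contradicting nonsingularity. A witness for $(r,r^+)\in\Gamma$ would be a leaf of $J$ (hence different from $n$) with $\beta_i=\beta_r=\alpha_I$, which is excluded; a witness for $(r,r^-)\in\Gamma$ is a leaf of $K$ with $\alpha_i=\alpha_r=\alpha_I$, and the rigidity fact forces it to be $n$, i.e.\ $U_{nn}=\alpha_I$. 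The reverse implication is immediate because $n\in r^+$ and then $\alpha_n=\alpha_r$, and the reformulation $1-\alpha_I\overline{\mu}_K=0$ uses $\overline{\mu}_K=\overline{\mu}_I=1/U_{nn}$ from Theorem~\ref{schur decomposition}(iii),(v).

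Granting these, the case $L=I$ is a short sign computation from the formula for $\mu_I$ in Theorem~\ref{schur decomposition}(ii). For $i\in J$, $(\mu_I)_i=\dfrac{1-\alpha_I\overline{\mu}_K}{1-\alpha_I\overline{\mu}_J}(\mu_J)_i$; the denominator is positive by Theorem~\ref{schur decomposition}(i), and the numerator is $\ge 0$ and vanishes exactly when $(r,r^-)\in\Gamma$. Hence if $(r,r^-)\notin\Gamma$ then $i\in\mathcal R_I\iff i\in\mathcal R_J\iff geod(i,J)\cap\Gamma_J=\emptyset\iff geod(i,I)\cap\Gamma=\emptyset$, by the GU result and locality; and if $(r,r^-)\in\Gamma$ then $(\mu_I)_i=0$, so $i\notin\mathcal R_I$, while $geod(i,I)$ already meets $\Gamma$ in $(r,r^-)$, so both sides of the claimed equivalence fail. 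For $i\in K$ with $i\ne n$ we have $(e_K)_i=0$, hence $(\mu_I)_i=(\mu_K)_i$ and $i\in\mathcal R_I\iff i\in\mathcal R_K$; since $(r,r^+)\notin\Gamma$ and $geod(i,I)=geod(i,K)\cup\{(r,r^+)\}$, the inductive hypothesis for $U_K$ closes the case. Finally part (i) is the case $L=I$ of part (ii), since $\mathcal R=\mathcal R_I$.

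Beyond the boundary lemma I do not anticipate real difficulty, only the bookkeeping of checking that our $\Gamma$ restricts exactly to the set used in the GU theorem of \cite{dell2000} and that the conventions on the distinguished leaf match along the recursion.
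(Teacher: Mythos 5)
Your proposal is correct and follows essentially the same route as the paper's proof: induction on the order via the standard block decomposition, the same sign analysis of $\mu_I$ from Theorem~\ref{schur decomposition}(ii), and the same nonsingularity (constant-row) arguments establishing $(I,I^+)\notin\Gamma$ and $(I,I^-)\in\Gamma \Leftrightarrow \alpha_I=U_{nn} \Leftrightarrow 1-\alpha_I\overline{\mu}_K=0$. Your ``boundary lemma'' and ``locality'' observations are just a cleaner packaging of steps the paper performs inline.
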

\begin{proof} (i) We prove the assertion by the induction. It is
trivial for $n=1,2$.   We assume that
 $$U=\left(\begin{array}{cc}
   U_J & \alpha_I\textbf{1}_J\textbf{1}_K^t\\
   b_K\textbf{1}_J^t& U_K
   \end{array}\right),$$
 where  $\alpha_I=\displaystyle \min\{ U_{ij}: i,j\in I\}$,
 $b_K=U_Ke_K$ with $e_K=(0,\cdots,0,1)^t$ unit vector, $U_J$ is GU
 matrix and $U_K$ is  $ \mathcal{U}$ matrix.  By  Theorem \ref{schur
 decomposition}(ii), we have
$$\mu_I=\left(\begin{array}{c}
\frac{1-\alpha_I\overline{\mu}_K}{1-\alpha_I\overline{\mu}_J}\mu_J\\
\mu_K-\frac{\overline{\mu}_J(1-\alpha_I\overline{\mu}_K)}{1-\alpha_I\overline{\mu}_J}e_K
\end{array}\right).
$$
 Now we consider the following two cases.

 {\bf Case 1:} $i\in I^-$. Then
  $i\in \mathcal{R}$ if and only if $i \in {\mathcal{R}}_J$ and $
  1-\alpha_I\overline{\mu}_K> 0 $ by  Theorem
    \ref{schur decomposition} (ii).  Note that $
  \overline{\mu}_I=\overline{\mu}_K$ from Theorem
     \ref{schur decomposition} (iii).
  $
  1-\alpha_I\overline{\mu}_K= 0 $  if and only if $
\alpha_I=\alpha_n$  if and only if $(I,I^-)\in
    \Gamma$ because it follows from the definition of $\Gamma$, and
  if  there exists an $n\neq q\in I^{+}$ such that $I\in N_q^+$ which
  implies $ \alpha_I=\alpha_q$ by definition of 2.1(i).
   Therefore each entries of the
  $q-$th row is $\alpha_q$  which implies that $U$ is singular. Hence $
  1-\alpha_I\overline{\mu}_K> 0 $ if and only if $(I,I^-)\notin
  \Gamma$. By the inductive hypothesis,
  $n\neq i\in \mathcal{R}$ if and only if
    $geod(i,I)\cap \Gamma=\emptyset$.

    {\bf Case 2: } $i\in I^+$. Suppose that $(I,I^+)\in \Gamma$.
    Then there exists $j_0\in I^-$ such that $I\in N_{j_0}^-$
    which implies $ \beta_I=\beta_{j_0}$. Hence
    $\beta_{j_0}=\alpha_I$ follows from $\alpha_I=\beta_I$. So $U$
    is singular, a contradiction.
    Therefore $(I,I^+)\notin \Gamma$. By Theorem
     \ref{schur decomposition} (ii), $n\neq i\in \mathcal{R}$ if and only if
$n\neq i\in {\mathcal{R}}_K$. Because  $U_K$ is $ \mathcal{U}$
matrix, $n\neq i\in \mathcal{R}$ if and only if $geod(i,I^+)\cap
 \Gamma=\emptyset$  by the induction hypothesis, so  if and only
 $geod(i,I)\cap \Gamma=\emptyset$.

(ii) Since $U|_L$ is GU matrix or $ \mathcal{U}$ matrix, the
assertion follows from Theorem 3 in \cite{dell2000} or (i).
\end{proof}

 \begin{theorem}
\label{exit of n}
 Let $U$ be  a $\mathcal{U}$ matrix. Then

{\rm{ (i).}} $U^{-1}$ is row diagonal dominant $M-$ matrix if and
only if
\begin{equation}\label{3.1}
\overline{\mu}_n \ge \sum_{L\in geod(r,n),L\neq
n}\frac{\overline{\mu}_{L^-}(1-\alpha_L\overline{\mu}_{L^+})}{
1-\alpha_L\overline{\mu}_{L^-}}.
\end{equation}

{\rm {(ii).}} $n\in \mathcal{R}$ if and only if (\ref{3.1}) becomes strict
inequality.
 \end{theorem}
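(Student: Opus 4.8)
The plan is to reduce both assertions to one explicit formula for the last coordinate $(\mu_I)_n$ of the potential vector $\mu_I=U^{-1}\textbf{1}$, and then to read off the two equivalences by comparing that coordinate with $0$. (Implicitly $U$ is nonsingular, since otherwise neither $\mu_I$ nor $\mathcal{R}$ is defined.) For the reduction, recall that a nonsingular $\mathcal{U}$ matrix has $U^{-1}$ a column diagonally dominant $M$-matrix automatically, so ``$U^{-1}$ is a \emph{row} diagonally dominant $M$-matrix'' is equivalent to $U^{-1}\textbf{1}=\mu_I\ge 0$ entrywise; by Theorem \ref{schur decomposition}(iv) the inequalities $(\mu_I)_i\ge 0$ already hold for $i=1,\dots,n-1$, so this is just the single condition $(\mu_I)_n\ge 0$. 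Likewise, by definition $n\in\mathcal{R}$ means exactly $(\mu_I)_n>0$. Hence it suffices to prove the identity
$$(\mu_I)_n=\overline{\mu}_n-\sum_{L\in geod(r,n),\,L\neq n}\frac{\overline{\mu}_{L^-}(1-\alpha_L\overline{\mu}_{L^+})}{1-\alpha_L\overline{\mu}_{L^-}},$$
with $\overline{\mu}_n=1/U_{nn}$ the total mass of the $1\times 1$ restriction $U|_{\{n\}}$; then (i) follows on imposing ``$\ge 0$'' and (ii) on imposing ``$>0$''.

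I would prove this identity by induction on $n=|I|$, in the same inductive style used for the earlier theorems of this section. For $n=1$ the sum is empty and $(\mu_I)_n=1/U_{11}=\overline{\mu}_n$. For the step, write $U$ in its standard block form with $J=I^-$, $K=I^+$. By condition (iv) of Definition 2.1 we have $r^+=K\in geod(r,n)$, so $n\in K$, the leaf $n$ remains the last leaf of the block $U_K$, and $geod(r,n)$ consists of the vertex $r=I$ followed by $geod(r^+,n)$ computed in the tree supporting $U_K$. Reading off the last coordinate of the $K$-block in the formula of Theorem \ref{schur decomposition}(ii) (here $e_K$ is the last unit vector) gives
$$(\mu_I)_n=(\mu_K)_n-\frac{\overline{\mu}_J(1-\alpha_I\overline{\mu}_K)}{1-\alpha_I\overline{\mu}_J}.$$
Since $U_K$ is again a $\mathcal{U}$ matrix and $U_K|_L=U|_L$ for $L\subseteq K$ (so the masses $\overline{\mu}_L$ are the same whether computed in $U$ or in $U_K$), the induction hypothesis expresses $(\mu_K)_n$ as $\overline{\mu}_n$ minus the sum over $geod(r^+,n)\setminus\{n\}$; adjoining the $L=I$ term (with $I^-=J$, $I^+=K$) telescopes to the stated formula. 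Positivity of every denominator $1-\alpha_L\overline{\mu}_{L^-}$ follows from Theorem \ref{schur decomposition}(i) applied to the nonsingular restriction $U|_L$, whose leading block is $U|_{L^-}$, so each step is legitimate.

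The main obstacle here is purely bookkeeping rather than anything deep. The one point that needs care is that $n$ sits on the ``$+$'' side at every level of the recursion — which is precisely condition (iv), $t^+\in geod(r,n)$ — so that ``$(\mu_K)_n$'' and its analogues genuinely are the last entries of the respective potential vectors; similarly one should note that the masses $\overline{\mu}_{L^\pm}$ appearing in $(\ref{3.1})$ are unambiguous, because $U|_L$ and $U_K|_L$ coincide as matrices for $L\subseteq K$ and hence have the same potential vectors. With the identity $(\mu_I)_n=\overline{\mu}_n-\sum_{L}(\cdots)$ established, parts (i) and (ii) are immediate.
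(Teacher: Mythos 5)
Your proposal is correct and follows essentially the same route as the paper: both reduce the statement to the identity $(\mu_I)_n=\overline{\mu}_n-\sum_{L\in geod(r,n),L\neq n}\frac{\overline{\mu}_{L^-}(1-\alpha_L\overline{\mu}_{L^+})}{1-\alpha_L\overline{\mu}_{L^-}}$, proved by induction along $geod(r,n)$ via the last coordinate of the $K$-block in Theorem \ref{schur decomposition}(ii). Your explicit appeal to Theorem \ref{schur decomposition}(iv) to dispose of the first $n-1$ row sums is a welcome clarification of a step the paper leaves implicit.
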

\begin{proof}
 From Theorem \ref{schur decomposition} (ii),  the sum of $n-$th
 row of $U^{-1}$ is
 $$ (\mu_K)_n-
 \frac{\overline{\mu}_{I^-}(1-\alpha_I\overline{\mu}_{I^+})}{
1-\alpha_I\overline{\mu}_{I^-}},$$ where $(\mu_K)_n$ is the last
component of $\mu_K$. Hence by the inductive hypothesis, the sum
of $n-$th row of $U^{-1}$ is
$$
\overline{\mu}_n - \sum_{L\in geod(r,n),L\neq
n}\frac{\overline{\mu}_{L^-}(1-\alpha_L\overline{\mu}_{L^+})}{
1-\alpha_L\overline{\mu}_{L^-}}.
$$
Therefore (i)  follows from Theorem \ref{schur decomposition}
 (iii). (ii) is just a consequence of (i) and the definition of
 exiting.
  \end{proof}
\begin{lemma}
\label{gu matrix} Let $U=(U_{ij},i,j\in I)$  a nonsingular GU
matrix. Then $ U_{ii}\overline{\mu}_I\ge 1$  for all $i\in I$.
Moreover, $\displaystyle\max\{ U_{ii}, \ i \in I
\}\overline{\mu}_I=1$ if and only if there exist a column whose
entries are equal to $\displaystyle\max\{ U_{ii}, \ i \in I \}$.
\end{lemma}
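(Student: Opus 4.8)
The plan is to reduce both statements to two structural facts about a nonsingular GU matrix $U=(U_{ij}:i,j\in I)$. \emph{Fact (a):} every diagonal entry dominates its row and its column, i.e.\ $U_{ij}\le U_{ii}$ and $U_{ij}\le U_{jj}$ for all $i,j$; this is immediate from $U_{ij}=\alpha_{i\wedge j}$ or $\beta_{i\wedge j}$, the fact that $\overrightarrow{\alpha},\overrightarrow{\beta}$ are $\preceq$-increasing, and $\alpha_i=\beta_i=U_{ii}$ at leaves. \emph{Fact (b):} the potential vector $\mu_I=U^{-1}\mathbf{1}$ is entrywise nonnegative. Granting these, the first assertion is a one-line computation: since $U\mu_I=\mathbf{1}$,
\[
1=\sum_{j\in I}U_{ij}(\mu_I)_j\le U_{ii}\sum_{j\in I}(\mu_I)_j=U_{ii}\,\overline{\mu}_I
\]
for every $i\in I$, using $U_{ij}\le U_{ii}$ and $(\mu_I)_j\ge0$. (In particular $\overline{\mu}_I>0$, as $\mu_I\ge0$ and $\mu_I\neq0$.)

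For the second assertion write $m=\max\{U_{ii}:i\in I\}$ (note $m>0$ for a nonsingular $U$). If some column $\ell$ of $U$ equals $m\mathbf{1}$, then $Ue_\ell=m\mathbf{1}$, hence $\mu_I=U^{-1}\mathbf{1}=\tfrac1m e_\ell$ and $\overline{\mu}_I=1/m$, i.e.\ $m\overline{\mu}_I=1$. Conversely, assume $m\overline{\mu}_I=1$. Subtracting $(U\mu_I)_k=1$ from $m\overline{\mu}_I=1$ gives, for every $k\in I$,
\[
\sum_{j\in I}(m-U_{kj})(\mu_I)_j=0 .
\]
By Fact (a) and $U_{jj}\le m$ we have $U_{kj}\le U_{jj}\le m$, so every summand $(m-U_{kj})(\mu_I)_j$ is nonnegative and hence zero. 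Since $\overline{\mu}_I=1/m>0$ and $\mu_I\ge0$, there is $\ell$ with $(\mu_I)_\ell>0$; then $(m-U_{k\ell})(\mu_I)_\ell=0$ forces $U_{k\ell}=m$ for all $k$, so the $\ell$-th column of $U$ equals $m\mathbf{1}$. This proves the Lemma modulo Fact (b).

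Fact (b) may be quoted from the framework of \cite{mcdonald1995,dell2000}; alternatively one proves it by induction on $n=|I|$, paralleling Theorem~\ref{schur decomposition}. Writing $U$ in the block form $\bigl(\begin{smallmatrix}U_{I^-}&\alpha_r\mathbf{1}\mathbf{1}^t\\\beta_r\mathbf{1}\mathbf{1}^t&U_{I^+}\end{smallmatrix}\bigr)$ with $\alpha_r\le\beta_r$ ($\alpha_r$ the least entry of $U$) and solving $U\mu_I=\mathbf{1}$ blockwise, one gets
\[
\mu_I|_{I^-}=\frac{1-\alpha_r\overline{\mu}_{I^+}}{1-\alpha_r\beta_r\,\overline{\mu}_{I^-}\overline{\mu}_{I^+}}\,\mu_{I^-},\qquad
\mu_I|_{I^+}=\frac{1-\beta_r\overline{\mu}_{I^-}}{1-\alpha_r\beta_r\,\overline{\mu}_{I^-}\overline{\mu}_{I^+}}\,\mu_{I^+},
\]
whose common denominator equals $\det U/(\det U_{I^-}\det U_{I^+})>0$. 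By induction $\mu_{I^-},\mu_{I^+}\ge0$, and $1-\alpha_r\overline{\mu}_{I^+}\ge0$ by Theorem~3.6(i) of \cite{mcdonald1995} (since $\alpha_r$ is at most the least entry of $U_{I^+}$); thus Fact (b) comes down to the single inequality $\beta_r\,\overline{\mu}_{I^-}\le1$.

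This last bound is the only genuinely non-routine point, and I expect it to be the main obstacle. I would establish it via a separate induction proving the sharper statement: \emph{for every nonsingular GU matrix $M$ with supporting tree rooted at $\rho$ and every $0\le\beta_0\le\beta_\rho$, one has $\overline{\mu}_M>0$ and $\beta_0\,\overline{\mu}_M\le1$.} The inductive step uses $\overline{\mu}_M=\frac{a'+a''-(\alpha+\beta)a'a''}{1-\alpha\beta a'a''}$ (with $\alpha=\alpha_\rho\le\beta=\beta_\rho$, $a'=\overline{\mu}_{M|_{\rho^-}}$, $a''=\overline{\mu}_{M|_{\rho^+}}$, positive denominator by nonsingularity) together with the identity
\[
\beta_0 a'+\beta_0 a''-\beta_0(\alpha+\beta)a'a''-1+\alpha\beta a'a''=(\beta_0 a'-1)(1-\beta a'')+(\beta-\beta_0)\,a''\,(\alpha a'-1).
\]
On the right, $\beta_0 a'-1\le0$ and $1-\beta a''\ge0$ follow from the inductive hypothesis applied to $M|_{\rho^-}$ and $M|_{\rho^+}$ (whose root $\beta$-values are $\ge\beta_\rho\ge\beta_0$), $\alpha a'-1\le0$ from Theorem~3.6(i) of \cite{mcdonald1995}, and $\beta-\beta_0\ge0$, $a''\ge0$; hence the right-hand side is $\le0$, which is exactly $\beta_0\overline{\mu}_M\le1$. (Positivity of $\overline{\mu}_M$ is obtained along the way, the numerator being $a'(1-\alpha a'')+a''(1-\beta a')>0$ unless $\alpha\beta a'a''=1$, which would make $M$ singular.) Once this elementary-but-fiddly estimate is in hand, Facts (a) and (b) are available and the two short arguments above finish the Lemma.
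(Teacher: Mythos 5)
Your argument is correct and coincides with the paper's own proof: both derive $U_{ii}\overline{\mu}_I\ge 1$ from $U\mu_I=\mathbf{1}$ together with $U_{ij}\le U_{ii}$ and $\mu_I\ge 0$, and both handle the equality case by the same term-by-term analysis of $\sum_{j}(U_{qq}-U_{ij})(\mu_I)_j=0$ plus the observation $Ue_q=\mathbf{1}$ for the converse. The auxiliary induction you sketch for Fact (b) is not needed, since the paper (like your first option) simply quotes the standard fact that $\mu_I=U^{-1}\mathbf{1}\ge 0$ for a nonsingular GU matrix.
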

\begin{proof}
Since $U$ is a GU matrix and ${\bf 1}=UU^{-1}{\bf 1} =U\mu$,
$1=\sum_{j=1}^{n}U_{ij}(\mu_I)_j\le
\sum_{j=1}^{n}U_{ii}(\mu_I)_j$. Hence  we have $
U_{ii}\overline{\mu}_I\ge 1$ for $i\in I.$ Let
$\displaystyle\max\{ U_{ii}\}=U_{qq} $ and suppose that
$U_{qq}\overline{\mu}_I=1.$  Then
$1=\sum_{j=1}^{n}U_{ij}(\mu_I)_j\le
\sum_{j=1}^{n}U_{ii}(\mu_I)_j\le U_{qq}\overline{\mu_I}=1$. Hence
$\sum_{j=1}^{n}(U_{ij}-U_{qq})\mu_j=0$  for $i\in I$, which yields
the result. Conversely, let $\max\{U_{ii}\}=U_{qq}$ and
$e_q=(0,\cdots, \frac{1}{U_{qq}},\cdots, 0)^t$. Then $Ue_q={\bf
1}$ Hence $\mu=e_q$ and $U_{qq}\overline{\mu_I}=1$.
\end{proof}
\begin{theorem}
\label{is not diagonially dominant }
 Let $U$ be a nonsingular $ \mathcal{U} $ matrix. If there exists
  $i$ with $i\neq n$ such that $ U_{ii}< U_{nn}.$  Then $U$ is not  a row diagonally
  dominant matrix, neither  $n$ is  a root of $P$.
  \end{theorem}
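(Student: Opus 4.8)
The plan is to show that inequality (\ref{3.1}) fails, and in fact fails strictly; Theorem \ref{exit of n}(i) then shows that $U^{-1}$ is not a row diagonally dominant $M$-matrix, and Theorem \ref{exit of n}(ii) shows that $n$ is not an exiting root of $P$. Write $S$ for the right-hand side of (\ref{3.1}), so the goal is the strict inequality $\overline{\mu}_n < S$. Two preliminary facts: since $U$ is nonnegative, $U_{nn} > U_{ii} \geq 0$, hence $U_{nn} > 0$; and for every $L \in geod(r,n)$ the matrix $U|_L$ is a nonsingular $\mathcal{U}$ matrix whose distinguished leaf is still $n$ (its leaf set $L$ contains $n$), so Theorem \ref{schur decomposition}(v) applied to $U|_L$ gives $\overline{\mu}_L = 1/U_{nn}$. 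In particular $\overline{\mu}_n = 1/U_{nn}$, and $\overline{\mu}_{L^+} = 1/U_{nn}$ whenever $L \in geod(r,n)$ and $L \neq n$, since then $L^+ \in geod(r,n)$ by condition (iv) of the definition of a $\mathcal{U}$ matrix.

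The first key step is that \emph{every} summand of $S$ is nonnegative. Fix $L \in geod(r,n)$, $L \neq n$. The matrix $U|_{L^-}$ is a nonsingular GU matrix (its leaf set does not contain $n$), so $\overline{\mu}_{L^-} > 0$ by Lemma \ref{gu matrix}; we have $\alpha_L \leq \alpha_n = U_{nn}$ because $L \prec n$ and $\alpha$ is $\preceq$-increasing (condition (iii)), whence $1 - \alpha_L\overline{\mu}_{L^+} = 1 - \alpha_L/U_{nn} \geq 0$; and $1 - \alpha_L\overline{\mu}_{L^-} > 0$ by Theorem \ref{schur decomposition}(i) applied to the nonsingular $\mathcal{U}$ matrix $U|_L$. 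Hence $S$ is bounded below by any one of its summands, and I will keep only the one indexed by $L_0 := i \wedge n$. This is a legitimate index: $L_0 \in geod(r,n)$; $L_0 \neq n$ (otherwise $n \preceq i$ forces $i = n$); and since $n \in L_0^+$ by condition (iv), necessarily $i \in L_0^-$.

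The second key step estimates this summand. As $i \in L_0^-$, the number $U_{ii}$ is one of the diagonal entries of the nonsingular GU matrix $U|_{L_0^-}$, so Lemma \ref{gu matrix} gives $\overline{\mu}_{L_0^-} \geq 1/U_{ii} > 1/U_{nn}$; this is the only place where the hypothesis $U_{ii} < U_{nn}$ is used. Putting $a = \overline{\mu}_{L_0^-}$, $\alpha = \alpha_{L_0} \geq 0$ and $u = U_{nn} > 0$, we have $a > 1/u$ and $1 - \alpha a > 0$ (again by Theorem \ref{schur decomposition}(i)), and
$$\frac{a(1 - \alpha/u)}{1 - \alpha a} > \frac{1}{u} \iff ua(1 - \alpha/u) > 1 - \alpha a \iff ua > 1,$$
which holds. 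Therefore the $L_0$-summand exceeds $1/U_{nn} = \overline{\mu}_n$, so $S > \overline{\mu}_n$; that is, (\ref{3.1}) fails strictly, and the theorem follows.

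The computations are elementary; the one substantive idea is that a single term of $S$ --- the term at $L_0 = i \wedge n$ --- already overshoots $\overline{\mu}_n$, which reduces the whole argument to the scalar inequality displayed above together with the lower bound $\overline{\mu}_{L_0^-} \geq 1/U_{ii}$ from Lemma \ref{gu matrix}. The point requiring the most care is the verification that every summand of $S$ is nonnegative, so that dropping all but one is legitimate; this uses Theorem \ref{schur decomposition}(i) for the denominators and the monotonicity of the labels for the numerators. A fully self-contained alternative, not using Theorem \ref{exit of n}, is to prove $(\mu_I)_n < 0$ directly by induction on $|I|$ from the Schur formula of Theorem \ref{schur decomposition}(ii), distinguishing $i \in I^-$ from $i \in I^+$; the case $i \in I^-$ again invokes $\overline{\mu}_{I^-} \geq 1/U_{ii} > 1/U_{nn}$ via Lemma \ref{gu matrix}, while $i \in I^+$ is closed by the induction hypothesis applied to $U_K$.
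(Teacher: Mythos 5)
Your proof is correct, and it reaches the conclusion by a genuinely different route from the paper's, although the two arguments share the same algebraic core. The paper proves the theorem by a fresh induction on the dimension: writing $U$ in its $2\times 2$ block form, the case $i\in K$ is closed by the induction hypothesis applied to $U_K$, while the case $i\in J$ uses Lemma \ref{gu matrix} to get $\overline{\mu}_J\ge 1/U_{ii}$ and then computes $\sum_{k\in K}(\mu_I)_k=\frac{\overline{\mu}_K-\overline{\mu}_J}{1-\alpha_I\overline{\mu}_J}<0$, concluding $(\mu_I)_n<0$ from Theorem \ref{schur decomposition}(iv). You instead take the already-telescoped row-sum formula of Theorem \ref{exit of n}, verify that every summand of the right-hand side $S$ is nonnegative (using Theorem \ref{schur decomposition}(i) for the denominators, the monotonicity of $\alpha$ and $\overline{\mu}_{L^+}=1/U_{nn}$ for the numerators), and then show the single term at $L_0=i\wedge n$ already exceeds $\overline{\mu}_n$. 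In effect, the paper's induction is absorbed into your citation of Theorem \ref{exit of n}, and your scalar inequality $\frac{a(1-\alpha/u)}{1-\alpha a}>\frac1u\iff ua>1$ is exactly the paper's identity $\frac1u-\frac{a(1-\alpha/u)}{1-\alpha a}=\frac{1/u-a}{1-\alpha a}$ in disguise; both hinge on the same input $\overline{\mu}_{L_0^-}\ge 1/U_{ii}>1/U_{nn}$ from Lemma \ref{gu matrix}. What your version buys is a non-inductive, localized argument (the obstruction is pinned to the specific vertex $i\wedge n$ on $geod(r,n)$); what it costs is the extra bookkeeping of checking that all the other summands of $S$ are nonnegative, a step the paper gets for free from Theorem \ref{schur decomposition}(iv). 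The alternative you sketch at the end is precisely the paper's proof.
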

\begin{proof}
We prove the assertion by the induction on the dimension of matrix
$U$ . It is easy to show that the assertion holds for order $n=
1,2.$  Now we assume that
$$U=\left(\begin{array}{cc}
   U_J & \alpha_I\textbf{1}_J\textbf{1}_K^t\\
   b_K\textbf{1}_J^t& U_K
   \end{array}\right).$$
If there exists a  $i\in K$ such that $(U)_{ii}< U_{nn}$, then by
the induction hypothesis,  the last component of $\mu_K$ is less
than 0. So $(\mu_I)_n<0$ by theorem \ref{schur decomposition}(ii).
Hence we assume that there exists  a $i\in J$ such that
$U_{ii}<U_{nn}$. Then by Lemma \ref{gu matrix},
$\overline{\mu}_J\ge \frac{1}{(U_J)_{ii}}=\frac{1}{U_{ii}}.$ Hence
\begin{eqnarray*}
\sum_{i\in K}(\mu_I)_i&=& \overline{\mu}_K- \frac{
\overline{\mu}_J(1-\alpha_I\overline{\mu}_K)}{
1-\alpha_I\overline{\mu}_J} =\frac{
\overline{\mu}_K-\overline{\mu}_J}{ 1-\alpha_I\overline{\mu}_J}\\
&\le & \frac{ 1}{
1-\alpha_I\overline{\mu}_J}(\frac{1}{U_{nn}}-\frac{1}{U_{ii}})<0,
\end{eqnarray*}
since $\overline{\mu}_K=\overline{\mu}_I=\frac{1}{U_{nn}}$ by
Theorem \ref{schur decomposition} (iii) and (v).  On the other
hand, $(\mu_I)_j\ge 0$ for $i\in K\setminus\{n\}$. Therefore,
$(\mu_I)_n<0$. So the assertion holds.
\end{proof}
\begin{corollary}
\label{n is not exiting of P}
  Let $U$ be a nonsingular $
\mathcal{U} $ matrix. If there exists a $i$ with
 $i\neq n$ such that $ U_{ii}\le U_{nn}$  then $n$ is not root of $P$.
 \end{corollary}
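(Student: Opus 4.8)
The corollary is essentially a weakening of Theorem~\ref{is not diagonially dominant } to cover the borderline case $U_{ii}=U_{nn}$, so the plan is to split on whether the inequality $U_{ii}\le U_{nn}$ is strict. If $U_{ii}<U_{nn}$ for some $i\neq n$, then Theorem~\ref{is not diagonially dominant } applies directly and gives that $n$ is not a root of $P$, and we are done. Hence the only work is the equality case $U_{ii}=U_{nn}$, and for that I would mimic the inductive argument in the proof of Theorem~\ref{is not diagonially dominant }, tracking the sign of $(\mu_I)_n$ through the $2\times2$ block decomposition rather than strict positivity of diagonal gaps.

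The plan for the equality case is to use induction on the order of $U$, with the decomposition
$$U=\left(\begin{array}{cc}
U_J & \alpha_I\textbf{1}_J\textbf{1}_K^t\\
b_K\textbf{1}_J^t& U_K
\end{array}\right).$$
If the index $i$ with $U_{ii}=U_{nn}$ lies in $K$, then $i\neq n$ is an index of the $\mathcal{U}$ matrix $U_K$ with $(U_K)_{ii}=(U_K)_{nn}$, so by the induction hypothesis $n$ is not a root of $P^K$, i.e. the last component $(\mu_K)_n\le 0$; then Theorem~\ref{schur decomposition}(ii) gives $(\mu_I)_n=(\mu_K)_n-\frac{\overline{\mu}_J(1-\alpha_I\overline{\mu}_K)}{1-\alpha_I\overline{\mu}_J}\le 0$, since the subtracted term is nonnegative (recall $1-\alpha_I\overline{\mu}_J>0$ and $1-\alpha_I\overline{\mu}_K=1-\alpha_I\overline{\mu}_I\ge 0$ from Theorem~\ref{schur decomposition}(iii) together with $\alpha_I\le U_{nn}$), hence $n$ is not a root of $P$. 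If instead $i\in J$, then Lemma~\ref{gu matrix} applied to the GU matrix $U_J$ gives $\overline{\mu}_J\ge 1/(U_J)_{ii}=1/U_{ii}=1/U_{nn}$, and exactly the computation in the proof of Theorem~\ref{is not diagonially dominant } yields
$$\sum_{i\in K}(\mu_I)_i=\frac{\overline{\mu}_K-\overline{\mu}_J}{1-\alpha_I\overline{\mu}_J}\le\frac{1}{1-\alpha_I\overline{\mu}_J}\left(\frac{1}{U_{nn}}-\frac{1}{U_{nn}}\right)=0,$$
using $\overline{\mu}_K=\overline{\mu}_I=1/U_{nn}$ from Theorem~\ref{schur decomposition}(iii),(v); combined with $(\mu_I)_j\ge 0$ for $j\in K\setminus\{n\}$ (Theorem~\ref{schur decomposition}(iv)) this forces $(\mu_I)_n\le 0$, so $n$ is not a root of $P$.

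The base cases $n=1,2$ are immediate, as in the parent theorem. The main subtlety — and the place I would be most careful — is the borderline arithmetic: in the strict case of Theorem~\ref{is not diagonially dominant } one gets $(\mu_I)_n<0$, whereas here the chain of inequalities is allowed to be equalities throughout, so one only concludes $(\mu_I)_n\le 0$. That is still enough, because "$n$ is a root of $P$" means $(\mu_I)_n>0$ strictly (it is the condition $\mu_n>0$ from the introduction), so $(\mu_I)_n\le 0$ precisely says $n$ is not a root. I would just need to double-check that all the denominators $1-\alpha_L\overline{\mu}_L$ appearing along $geod(r,n)$ are genuinely positive so that no sign flips occur when passing to the sub-$\mathcal{U}$-matrix $U_K$; this follows from Theorem~\ref{schur decomposition}(i) applied recursively. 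No genuinely new idea is needed beyond Theorem~\ref{is not diagonially dominant }; the corollary is the natural closure of that result under taking the non-strict hypothesis.
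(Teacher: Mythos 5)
Your proposal is correct and follows exactly the route the paper intends: its proof is the one-line remark that the corollary ``follows from Theorem~\ref{is not diagonially dominant } and its proof,'' and your case split (strict inequality handled by the theorem, equality handled by rerunning the same induction with non-strict inequalities to get $(\mu_I)_n\le 0$) is precisely the content of that remark. The sign checks you flag ($1-\alpha_I\overline{\mu}_J>0$ and $1-\alpha_I\overline{\mu}_K\ge 0$) are indeed the only points needing care, and both are already established in the proof of Theorem~\ref{schur decomposition}.
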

\begin{proof}
It follows from Theorem \ref{is not diagonially dominant } and its
proof.
\end{proof}
\begin{lemma}
\label{the smallese row sum}
 Let $U$ be a $ \mathcal{U}$ matrix. If $\sum_{j\in I}U_{nj} \le
 \sum_{j\in I}U_{ij}$ for $i=1,2,\cdots, n-1$. Then $U$ is  a row and
 column diagonally dominant  M-matrix and $n$ is an exiting of $P$.
 \end{lemma}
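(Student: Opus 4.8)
The statement concerns a $\mathcal{U}$ matrix $U$ whose last row has the smallest row sum among all rows: $\sum_{j\in I}U_{nj}\le\sum_{j\in I}U_{ij}$ for all $i<n$. We must conclude that $U^{-1}$ is a row \emph{and} column diagonally dominant $M$-matrix and that $n$ is an exiting root of $P$. The natural strategy is an induction on the dimension $n$, exploiting the $2\times 2$ block decomposition $U=\bigl(\begin{smallmatrix}U_J&\alpha_I\mathbf{1}_J\mathbf{1}_K^t\\ b_K\mathbf{1}_J^t&U_K\end{smallmatrix}\bigr)$ that underlies all the preceding theorems, together with Theorem \ref{schur decomposition} for the formulas for $\mu_I$ and $\nu_I$, and Theorem \ref{exit of n} for the characterization of when $n$ is an exiting root.

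\textbf{Key steps.} First I would record the base cases $n=1,2$, which are immediate. For the inductive step, the first task is to show the row-sum hypothesis descends to the relevant subblocks: since $U_K$ is again a $\mathcal{U}$ matrix with the same distinguished leaf $n$, and the row sums of $U_K$ are related to those of $U$ by subtracting the common contribution $\alpha_I|J|$ from the $K$-rows, the hypothesis $\sum_j U_{nj}\le\sum_j U_{ij}$ restricted to $i\in K$ passes to $U_K$; one checks the analogous descent to $U_J$ via the GU structure. Second, I would apply the induction hypothesis to $U_K$ to get that $(U_K)^{-1}$ is row and column diagonally dominant and that $n$ is exiting for $U_K$, i.e.\ $\overline{\mu}_K>0$ with $(\mu_K)_n>0$ (or at least $\ge 0$ in the right places). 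Third, using Theorem \ref{schur decomposition}(ii) I would express
\[
(\mu_I)_n=(\mu_K)_n-\frac{\overline{\mu}_J\,(1-\alpha_I\overline{\mu}_K)}{1-\alpha_I\overline{\mu}_J},
\]
and show this is positive: the minimality of the $n$-th row sum forces $U_{nn}\le U_{ii}$ for all $i$ — indeed $U_{nn}=\sum_j U_{nj}-\sum_{j\ne n}U_{nj}$ and the ultrametric/GU structure bounds the off-diagonal sums — so by Corollary \ref{n is not exiting of P}'s contrapositive reasoning and Lemma \ref{gu matrix} ($\overline{\mu}_J\le 1/\max_i(U_J)_{ii}$ fails in the wrong direction, so instead I'd use $\overline{\mu}_J\cdot\min_i(U_J)_{ii}\le 1$), the subtracted term is controlled by $\overline{\mu}_K=\overline{\mu}_I=1/U_{nn}$. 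Fourth, for the column-dominance and the $M$-matrix property, I would verify that every row sum of $U^{-1}$ is nonnegative (row dominance) by the same block formula applied to $C\mathbf{1}_J+D\mathbf{1}_K$ and $E\mathbf{1}_J+F\mathbf{1}_K$ from Theorem \ref{schur decomposition}(i), and similarly that every column sum is nonnegative using $\mathbf{1}_J^tC+\mathbf{1}_K^tE=0$ and $\mathbf{1}_J^tD+\mathbf{1}_K^tF=\nu_K^t\ge 0$, the latter following from the induction hypothesis applied to $U_K^t$ (or from Theorem \ref{root of transpose of P}). Finally, that $U^{-1}$ is a $Z$-matrix with nonnegative inverse is inherited from $U$ being a $\mathcal{U}$ matrix (its inverse is always a column diagonally dominant $M$-matrix by the cited results of Nabben), so the new content is precisely the \emph{row} dominance plus $n$ being exiting.

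\textbf{Main obstacle.} The delicate point is showing $(\mu_I)_n\ge 0$, and more precisely $>0$, purely from the row-sum minimality hypothesis rather than from a pointwise diagonal comparison — one must translate ``smallest row sum'' into usable inequalities among $U_{nn}$, $\overline{\mu}_J$, $\overline{\mu}_K$, and $\alpha_I$. Here the key estimate is that the $n$-th row sum being minimal, combined with the specific ultrametric form of $U$ (the last row of $U_K$ equals $b_K$ and the $K$-block of $U$'s last row is $b_K^t$ while its $J$-block is $\alpha_I\mathbf{1}_J^t$), forces $U_{nn}$ to be the minimal diagonal entry; then Theorem \ref{is not diagonially dominant } and Corollary \ref{n is not exiting of P} do \emph{not} apply to block us (they concern strict inequality), and instead we need the sharper accounting that the row-sum gap $\sum_j U_{ij}-\sum_j U_{nj}\ge 0$ for $i\in J$ yields $\overline{\mu}_J\le 1/U_{nn}=\overline{\mu}_K$ via Lemma \ref{gu matrix} and a direct summation argument, whence $\overline{\mu}_K-\overline{\mu}_J\ge 0$ and the displayed formula for $\sum_{i\in K}(\mu_I)_i=\frac{\overline{\mu}_K-\overline{\mu}_J}{1-\alpha_I\overline{\mu}_J}\ge 0$ closes the induction together with $(\mu_I)_i\ge 0$ for $i\ne n$ from Theorem \ref{schur decomposition}(iv).
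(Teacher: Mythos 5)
Your inductive set-up does not close at the decisive step. What you actually establish at the end is $\sum_{i\in K}(\mu_I)_i=\frac{\overline{\mu}_K-\overline{\mu}_J}{1-\alpha_I\overline{\mu}_J}\ge 0$, but the quantity you need is the single entry $(\mu_I)_n$. By Theorem \ref{schur decomposition}(ii) only the last component of $\mu_K$ is perturbed: $(\mu_I)_i=(\mu_K)_i$ for $i\in K\setminus\{n\}$ and $(\mu_I)_n=(\mu_K)_n-\frac{\overline{\mu}_J(1-\alpha_I\overline{\mu}_K)}{1-\alpha_I\overline{\mu}_J}$, so nonnegativity of the block sum together with $(\mu_I)_i\ge 0$ for $i\ne n$ says nothing about the sign of $(\mu_I)_n$ (you would be subtracting the nonnegative quantity $\sum_{i\in K\setminus\{n\}}(\mu_K)_i$ from a nonnegative total). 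The inductive hypothesis ``$n$ is exiting for $U_K$'', i.e.\ $(\mu_K)_n>0$, carries no quantitative lower bound, so it cannot absorb the subtracted term; to make the induction close you would have to strengthen the statement being propagated, e.g.\ to $(\mu_K)_n\ge 1/\sum_{j\in K}(U_K)_{nj}$. There are also two local errors: Lemma \ref{gu matrix} gives $U_{ii}\overline{\mu}\ge 1$ for \emph{every} $i$, hence $\overline{\mu}_J\cdot\min_i(U_J)_{ii}\ge 1$, the opposite of the inequality you propose to use; and Corollary \ref{n is not exiting of P} concludes that $n$ is \emph{not} a root, so its ``contrapositive reasoning'' points the wrong way here. (Also, the row-sum hypothesis does not descend to $U_K$ by subtracting a common $\alpha_I|J|$: row $i\in K$ contributes $|J|\,U_{in}$ in the $J$-columns, not $|J|\,\alpha_I$; that step can be repaired using $U_{in}\le U_{nn}$, but as written it is incorrect.)

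The paper avoids all of this with a short direct argument, no induction: summing the $n$-th row of $U^{-1}U=I_n$ over $j$ gives $1=\sum_{l}(U^{-1})_{nl}\sum_{j}U_{lj}$; since $(U^{-1})_{nl}\le 0$ for $l\ne n$ and $\sum_jU_{lj}\ge\sum_jU_{nj}$, this is $\le\bigl(\sum_jU_{nj}\bigr)\sum_l(U^{-1})_{nl}$, whence $\mu_n=\sum_l(U^{-1})_{nl}\ge 1/\sum_jU_{nj}>0$. The remaining row sums are nonnegative by Theorem \ref{schur decomposition}(iv) and the column sums by Theorem \ref{schur decomposition}(v), which yields both dominance statements at once. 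That quantitative bound $\mu_n\ge 1/\sum_jU_{nj}$ is exactly the strengthened inductive hypothesis your approach is missing.
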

 \begin{proof}
From $U^{-1}U=I_n,$ $ 1=\sum_{j\in I}\sum_{l\in
I}(U^{-1})_{nl}U_{lj}= \sum_{l\in I,\ l\neq n}\sum_{j\in
I}U_{lj}(U^{-1})_{nl}+ \sum_{j\in I}U_{nj}(U^{-1})_{nn}$. Since
$(U^{-1})_{nl}\le 0$ for $l\neq n$ and $\sum_{j\in I}U_{nj} \le
 \sum_{j\in I}U_{lj}$ for $l\neq n, $   $1\le
\sum_{l\in I,\ l\neq n}\sum_{j\in I}U_{nj}(U^{-1})_{nl}+
\sum_{j\in I}U_{nj}(U^{-1})_{nn}= \sum_{j\in I}U_{nj}\sum_{l\in I
}(U^{-1})_{nl}.$ Hence $\sum_{l\in I }(U^{-1})_{nl}\ge \frac{1}{
\sum_{j\in I}U_{nj}}>0$. Hence the result follows form theorem
\ref{schur decomposition}.
 \end{proof}

\section{Links of P }
\noindent
In this section, we describe completely  the links of  transient
kernel $P$ associated with  a class of  $ \mathcal{U}$ matrices.
Firstly we give some lemmas.

\begin{lemma}
\label{links of I-} Let $U$ be a nonsingular $ \mathcal{U}$
matrix. Then for $ i,j\in I^{-}=J,\ i\neq j$, $ (U^{-1})_{ij}<0$
if and only if $ (U_{J}^{-1})_{ij}<0$ and $ U_{ij}>\alpha_I$.
\end{lemma}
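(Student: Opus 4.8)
The plan is to use the Schur-complement formula for $U^{-1}$ from Theorem \ref{schur decomposition}(i) restricted to the $J\times J$ block, namely
\[
C=U_J^{-1}+\frac{\alpha_I}{1-\alpha_I\overline{\mu}_J}\,\mu_J\nu_J^t,
\]
and to read off the sign of $(U^{-1})_{ij}=C_{ij}$ from this identity. Since $1-\alpha_I\overline{\mu}_J>0$ by Theorem \ref{schur decomposition}(i), and since $\mu_J,\nu_J\geq 0$ (these are the potential vectors of the nonsingular GU matrix $U_J$, so their entries are nonnegative by the GU theory, e.g.\ Lemma \ref{gu matrix} together with the results of \cite{mcdonald1995}), the rank-one correction term $\frac{\alpha_I}{1-\alpha_I\overline{\mu}_J}(\mu_J)_i(\nu_J)_j$ is always $\geq 0$. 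Hence $C_{ij}<0$ forces $(U_J^{-1})_{ij}<0$, which gives one direction of the "only if" part immediately.

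For the remaining content one must analyze when the nonnegative correction term is strictly positive versus zero, and compare its magnitude with $(U_J^{-1})_{ij}$. First I would observe that $\alpha_I\geq 0$, so the correction vanishes exactly when $\alpha_I=0$ or $(\mu_J)_i=0$ or $(\nu_J)_j=0$. The key reduction is to show that the condition "$U_{ij}>\alpha_I$" is precisely what rules out cancellation: when $U_{ij}=\alpha_I$ (the global minimum), I would argue that $i$ and $j$ lie on opposite sides of the top split of the subtree $T|_J$, or more precisely that $i\wedge j$ realizes the minimal label, and then use the GU description (Theorem 3 of \cite{dell2000}, or the explicit block structure of $U_J$) to show $(U_J^{-1})_{ij}=0$ in that case so that $(U^{-1})_{ij}$ reduces to the nonnegative correction term, hence is not negative. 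Conversely, when $U_{ij}>\alpha_I$ and $(U_J^{-1})_{ij}<0$, I would show the negative quantity $(U_J^{-1})_{ij}$ is not swamped by the correction: this should follow because $(U_J^{-1})_{ij}<0$ already means $i,j$ are "linked" inside $U_J$, and one verifies via the inductive/Schur structure of the GU matrix $U_J$ that the magnitude of $(U_J^{-1})_{ij}$ dominates, or simply that $(U^{-1})_{ij}$ carries a strictly negative term coming from a deeper split that the top-level correction cannot cancel.

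Concretely, I expect the cleanest route is induction on $|J|$ using the $2\times 2$ block decomposition of the GU matrix $U_J$ itself (it too has a nested block form), tracking how $(U_J^{-1})_{ij}$ and the quantities $\mu_J,\nu_J,\overline{\mu}_J$ propagate; the sign bookkeeping then becomes a finite case check on which sub-block $i$ and $j$ fall into, entirely parallel to the proofs of Theorems \ref{root of transpose of P} and \ref{ root of P}. Alternatively one can quote the corresponding link characterization for GU matrices from \cite{dell2000} directly for $U_J$ and only handle the rank-one perturbation, which is shorter.

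The main obstacle I anticipate is the direction "$(U_J^{-1})_{ij}<0$ and $U_{ij}>\alpha_I$ $\Rightarrow$ $(U^{-1})_{ij}<0$": here one must genuinely rule out exact cancellation between $(U_J^{-1})_{ij}<0$ and the nonnegative rank-one term $\frac{\alpha_I}{1-\alpha_I\overline{\mu}_J}(\mu_J)_i(\nu_J)_j$. Showing this strict inequality—rather than just $\leq 0$—is where the hypothesis $U_{ij}>\alpha_I$ must be used in an essential way, and it will likely require either a careful lower bound on $|(U_J^{-1})_{ij}|$ in terms of the tree data, or an argument that equality would force two proportional rows/columns in $U$ and hence contradict nonsingularity, in the spirit of the singularity arguments already used in the proof of Theorem \ref{schur decomposition}.
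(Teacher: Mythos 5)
There is a genuine gap, and one of your intermediate claims is false. Your argument for the implication ``$(U^{-1})_{ij}<0\Rightarrow U_{ij}>\alpha_I$'' rests on the claim that $U_{ij}=\alpha_I$ forces $(U_J^{-1})_{ij}=0$, so that $(U^{-1})_{ij}$ ``reduces to the nonnegative correction term.'' That is not what happens. Take $U_J=\left(\begin{smallmatrix} a & \alpha_I\\ \beta & b\end{smallmatrix}\right)$ as the top-left GU block of a $\mathcal{U}$ matrix whose global minimum $\alpha_I>0$ is attained at $U_{12}$: then $(U_J^{-1})_{12}=-\alpha_I/\det U_J<0$, while a direct computation gives $(\mu_J)_1(\nu_J)_2\,\alpha_I/(1-\alpha_I\overline{\mu}_J)=\alpha_I/\det U_J$, so $(U^{-1})_{12}=0$ by \emph{exact cancellation} between a strictly negative $(U_J^{-1})_{ij}$ and a strictly positive rank-one correction. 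Thus the dichotomy you set up --- ``the correction vanishes'' versus ``a genuine link cannot be swamped'' --- misdescribes the mechanism, and the cancellation problem you correctly flag at the end (for the sufficiency direction) is exactly the content of the lemma beyond the one easy implication you do prove, namely $(U^{-1})_{ij}<0\Rightarrow(U_J^{-1})_{ij}<0$.

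The idea that closes both gaps, and which the paper uses, is to recognize the corrected block not as ``$U_J^{-1}$ plus a perturbation to be estimated'' but as the exact inverse of another GU matrix: by Sherman--Morrison,
$$C=U_J^{-1}+\frac{\alpha_I}{1-\alpha_I\overline{\mu}_J}\mu_J\nu_J^t=\bigl(U_J-\alpha_I\textbf{1}_J\textbf{1}_J^t\bigr)^{-1},$$
and $U_J(\alpha_I):=U_J-\alpha_I\textbf{1}_J\textbf{1}_J^t$ is again a nonsingular GU matrix (subtracting the global minimum preserves the ultrametric structure, and $\alpha_I<\min_i (U_J)_{ii}$, since otherwise $U$ would have a constant row and be singular). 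With this identity, $(U^{-1})_{ij}<0\Rightarrow U_{ij}>\alpha_I$ follows at once from the fact that a negative off-diagonal entry of the inverse of a GU matrix forces the corresponding matrix entry to be positive (Theorem 3.6 in \cite{nabben1995}), applied to $U_J(\alpha_I)$ whose $(i,j)$ entry is $U_{ij}-\alpha_I$; and the equivalence of $(U_J^{-1})_{ij}<0$ with $\bigl((U_J(\alpha_I))^{-1}\bigr)_{ij}<0$ under the hypothesis $U_{ij}>\alpha_I$ is precisely the content of Theorem 6 in \cite{dell2000} on how the links of a GU matrix behave under such a shift. Without this reformulation, or some equivalent quantitative control of the cancellation, your outline does not yield a proof.
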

\begin{proof} Necessity.
By Theorem \ref{schur decomposition},  we have
$$(U^{-1})_J=U_J^{-1}+\frac{\alpha_I}{1-\alpha_I\overline{\mu}_J}\mu_J\nu_J^t=(U_J(\alpha_I))^{-1},$$
 where
 $U_J(\alpha_I)=U_J-\alpha_I\textbf{1}_J\textbf{1}_J^t$ is a nonsingular GU matrix.
 Hence for $i,j\in I^-=J,$  $(U^{-1})_{ij}<0$ implies  $ (U_{J}^{-1})_{ij}<0$.
 Further, by Theorem 3.6 in \cite{nabben1995},
 $(U^{-1})_{ij}=(U_J(\alpha_I))^{-1}_{ij}<0 $ implies that $(U_J(\alpha_I))_{ij} >
 0.
 $  So $ U_{ij}>\alpha_I$.

 Sufficiency. Note that $ \alpha_I< \displaystyle \min\{
 (U_J)_{ii}\}$ (otherwise every entries of some row  rather than $n$  is
 $\alpha_I$ which yields $U$ is singular.)  Note that $U_J$ is a $GU$ matrix.
  By Theorem 6
 in \cite{dell2000}, $ (U_J^{-1})_{ij}<0 $ implies that $(U^{-1})_{ij}=(U_J(\alpha_I)^{-1})_{ij}<0
 $.
\end{proof}
\begin{lemma}
\label{link for I-I+} Let $U$ be a nonsingular $ \mathcal{U}$
matrix of order $n$.  Then

{\rm{(i).}} for $i\in I^{-}=J,\ j\in I^+=K $, $(U^{-1})_{ij}<0 $
if and only if $i\in {\mathcal{R}}_J$ and $j=n$.

{\rm{(ii).}}  for $i\in I^{+},\ j\in I^- $, $(U^{-1})_{ij}<0 $ if
and only if  $i=n$  and $j\in {\mathcal{R}}_J$.
\end{lemma}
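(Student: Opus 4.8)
The plan is to use the explicit block form of $U^{-1}$ from Theorem~\ref{schur decomposition}(i), specializing to the off-diagonal blocks $D$ and $E$. For part~(i) I would start from
$D = \frac{-\alpha_I}{1-\alpha_I\overline{\mu}_J}\,\mu_J\nu_K^t.$
Since $U$ is nonsingular $\mathcal U$, Theorem~\ref{schur decomposition}(i) gives $1-\alpha_I\overline{\mu}_J>0$, so the scalar prefactor is $\le 0$, and it is strictly negative precisely when $\alpha_I>0$. Thus for $i\in J$, $j\in K$ we get $(U^{-1})_{ij}=D_{ij}<0$ iff $\alpha_I>0$, $(\mu_J)_i>0$ and $(\nu_K)_j>0$. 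Now $(\mu_J)_i>0$ means exactly $i\in{\mathcal R}_J$ by definition of the exiting root. For the $\nu_K$ factor I would invoke Theorem~\ref{schur decomposition}(v) applied to the $\mathcal U$ matrix $U_K$ (whose distinguished leaf is still $n$): it says $(\nu_K)_l=0$ for all $l\ne n$ and $(\nu_K)_n=1/U_{nn}>0$. Hence $(\nu_K)_j>0$ iff $j=n$. Combining, $(U^{-1})_{ij}<0$ iff $i\in{\mathcal R}_J$, $j=n$, and $\alpha_I>0$. The remaining point is that the hypothesis "$i\in{\mathcal R}_J$" already forces $\alpha_I>0$: if $\alpha_I=0$ then $D=0$ and also one checks that $i\in{\mathcal R}_J$ cannot coexist with a genuine link — more cleanly, when $\alpha_I=0$ the matrix $U$ is block lower/upper structured so there is no link from $J$ to $K$ at all, consistent with the claimed equivalence only if we read it as: the link exists iff $i\in{\mathcal R}_J$, $j=n$ \emph{and} $\alpha_I>0$; so I will either fold the condition $\alpha_I>0$ into the statement's standing assumption (the standard presentation assumes $\alpha_I=\min U_{ij}$, and a link forces this minimum to be positive) or note it explicitly. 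This bookkeeping about $\alpha_I=0$ is the one place that needs care.

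For part~(ii) the argument is shorter and more rigid. From Theorem~\ref{schur decomposition}(i),
$E = \frac{-1}{1-\alpha_I\overline{\mu}_J}\,e_K\nu_J^t,$
and $e_K=(0,\dots,0,1)^t$ is supported only on the last leaf $n$. Therefore every row of $E$ except the $n$-th is identically zero, so for $i\in K$ with $i\ne n$ we have $(U^{-1})_{ij}=E_{ij}=0$, in particular not negative. For $i=n$, $E_{nj} = \frac{-1}{1-\alpha_I\overline{\mu}_J}(\nu_J)_j$, and since the prefactor is strictly negative, $E_{nj}<0$ iff $(\nu_J)_j>0$ iff $j\in{\mathcal R}_J^t$. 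The final step is to identify ${\mathcal R}_J^t$ with ${\mathcal R}_J$: because $U_J$ is a \emph{symmetric}? — no, $U_J$ is GU, not necessarily symmetric, so this needs justification. Here I would use $\overline{\mu}_J=\overline{\nu}_J$ together with the GU structure; actually the cleanest route is Theorem~\ref{root of transpose of P}(ii) and Theorem~\ref{ root of P}(ii) (or Theorem~3 / Theorem~6 of \cite{dell2000} for GU matrices, already cited in Lemma~\ref{links of I-}), which characterize both ${\mathcal R}_J$ and ${\mathcal R}_J^t$ via the tree; but in fact for the submatrix $U_J$, which is GU with its own distinguished structure, one shows directly that $\nu_J$ and $\mu_J$ have the same support. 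I expect I can avoid this by simply phrasing~(ii) as "$j\in{\mathcal R}_J^t$" if that matches the paper's conventions; comparing with part~(i), the paper writes ${\mathcal R}_J$, so I must verify ${\mathcal R}_J^t={\mathcal R}_J$ for the GU block $U_J$, which follows from the fact that $U_J=U_J(\alpha_I)+\alpha_I\mathbf 1\mathbf 1^t$ has the same sign pattern of potential vectors as a GU matrix whose exiting roots are tree-characterized symmetrically — this is the technical nub.

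In summary: expand $D$ and $E$, read off the scalar sign from nonsingularity, factor the rank-one structure, and translate the positivity of the vector factors into "$i\in{\mathcal R}_J$" (resp. "$j\in{\mathcal R}_J$") via Theorem~\ref{schur decomposition}(v) for the $\nu_K$ part and via the GU theory of \cite{dell2000} for the $\nu_J$ part. The main obstacle is not any computation but the two identification steps — handling the degenerate case $\alpha_I=0$ in~(i), and confirming ${\mathcal R}_J={\mathcal R}_J^t$ for the GU block in~(ii); both are dispatched by the already-cited results of McDonald--Neumann--Schneider--Tsatsomeros and Dellacherie--Mart\'inez--Mart\'in, or by a short direct argument on the supporting subtree $(T|_J,{\mathcal J}_J)$.
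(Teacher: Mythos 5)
Your core computation is exactly the paper's proof: the paper likewise reads off the signs of the blocks $D=\frac{-\alpha_I}{1-\alpha_I\overline{\mu}_J}\mu_J\nu_K^t$ and $E=\frac{-1}{1-\alpha_I\overline{\mu}_J}e_K\nu_J^t$ from Theorem~\ref{schur decomposition}(i) and translates positivity of the vector factors into membership in the root sets, so the approach is the same. The useful content is in the two ``identification steps'' you flag, both of which are genuine and both of which the paper passes over in silence.

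On the first: you are right that $\alpha_I=0$ is a real degenerate case. Then $D\equiv 0$, while ${\mathcal R}_J\neq\emptyset$ always (Lemma~\ref{gu matrix} gives $\overline{\mu}_J\ge 1/(U_J)_{ii}>0$, so some $(\mu_J)_i>0$), so the equivalence in (i) fails as literally stated; e.g.\ $U=\left(\begin{smallmatrix}a&0\\ b&b\end{smallmatrix}\right)$ with $a,b>0$ is a nonsingular $\mathcal U$ matrix with $(U^{-1})_{12}=0$ yet $1\in{\mathcal R}_J$, $2=n$. Folding in $\alpha_I>0$ is the correct repair. On the second: do \emph{not} invest effort in proving ${\mathcal R}_J={\mathcal R}_J^t$ for the GU block --- it is false. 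The paper's own Example decides it: there $J=I^-=\{1,2\}$, $U_J=\left(\begin{smallmatrix}3&2\\ 3&3\end{smallmatrix}\right)$, so $\mu_J=\frac13(1,0)^t$ and $\nu_J=\frac13(0,1)^t$, giving ${\mathcal R}_J=\{1\}$ but ${\mathcal R}_J^t=\{2\}$; and indeed $(U^{-1})_{62}<0$ while $(U^{-1})_{61}=0$. The only correct conclusion from $E_{nj}<0\Leftrightarrow(\nu_J)_j>0$ is $j\in{\mathcal R}_J^t$, so part (ii) of the lemma (and item (i.b) of Theorem~\ref{link of p}) must be read with ${\mathcal R}_J^t$ in place of ${\mathcal R}_J$; the paper's statement and proof are simply sloppy at this point. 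In short: keep your computation, adopt your own proposed fixes (add $\alpha_I>0$ in (i), write ${\mathcal R}_J^t$ in (ii)), and discard the branch of your plan that hoped to identify the two root sets.
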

\begin{proof}
(i).  For $i\in I^{-}=J,\ j\in I^+=K $,  by Theorem \ref{schur
decomposition} (i), $
(U^{-1})_{ij}=(\frac{-\alpha_I}{1-\alpha_I\overline{\mu}_J}\mu_J\nu_K^t)_{ij}<0$
if and only if $(\mu_J)_i>0$ and $(\nu_K)_j>0$ if and only if
$i\in {\mathcal{R}}_J$ and $j=n$.

(ii). For $i\in I^{-}=J,\ j\in I^+=K $,  by Theorem \ref{schur
decomposition},  $(U^{-1})_{ij}<0 $ if and only if $(
\frac{-1}{1-\alpha_I\overline{\mu}_J}e_K\nu_J^t)_{ij}<0$  if and
only if  $i=n$  and $j\in {\mathcal{R}}_J$.
\end{proof}
\begin{lemma}
\label{links for I+I+} Let $U$ be a nonsingular  $ \mathcal{U}$
matrix. Then for $i\in I^{+}=K,\ j\in I^+ $ and $i\neq j$,
$(U^{-1})_{ij}<0 $ if and only if $(U_K^{-1})_{ij}<0$.
\end{lemma}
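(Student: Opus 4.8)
The plan is to read off the relevant block of $U^{-1}$ from Theorem \ref{schur decomposition}(i) and show that, for $i,j \in I^+ = K$ with $i \neq j$, the correction term added to $U_K^{-1}$ does not affect the sign of the $(i,j)$ entry. Recall from Theorem \ref{schur decomposition}(i) that the lower-right block of $U^{-1}$ is
$$
F = U_K^{-1} + \frac{\alpha_I\,\overline{\mu}_J}{1-\alpha_I\overline{\mu}_J}\, e_K\nu_K^t,
$$
where $e_K = (0,\dots,0,1)^t$. Since $e_K$ has a nonzero entry only in its last coordinate (corresponding to the leaf $n$), the rank-one correction $e_K\nu_K^t$ has a nonzero row only in the row indexed by $n$. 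Hence for $i \in K$ with $i \neq n$ we get immediately $(U^{-1})_{ij} = (F)_{ij} = (U_K^{-1})_{ij}$ for every $j \in K$, and the equivalence is trivial in that case.

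The remaining case is $i = n$. Here $(U^{-1})_{nj} = (U_K^{-1})_{nj} + \dfrac{\alpha_I\overline{\mu}_J}{1-\alpha_I\overline{\mu}_J}(\nu_K)_j$. By Theorem \ref{schur decomposition}(i) we have $\alpha_I\overline{\mu}_J < 1$, so the scalar coefficient is $\ge 0$; and $\nu_K = (U_K^t)^{-1}\textbf{1}_K$ has all entries $\ge 0$ because $U_K^{-1}$ is a (column diagonally dominant) $M$-matrix, so its transpose inverse is nonnegative. Therefore the correction term $\dfrac{\alpha_I\overline{\mu}_J}{1-\alpha_I\overline{\mu}_J}(\nu_K)_j \ge 0$, which gives one implication directly: if $(U^{-1})_{nj} < 0$ then $(U_K^{-1})_{nj} < 0$. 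For the converse, I must rule out the possibility that a strictly negative $(U_K^{-1})_{nj}$ is cancelled to a nonnegative value by the correction term. Applying Theorem \ref{root of transpose of P}(i) to the $\mathcal{U}$ matrix $U_K$, its only exiting root for the transpose is its last leaf, which is again $n$; consequently $(\nu_K)_j = 0$ for every $j \in K$ with $j \neq n$. Since $i = n \neq j$ forces $j \neq n$, the correction term vanishes and $(U^{-1})_{nj} = (U_K^{-1})_{nj}$ exactly. This closes both directions.

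The only genuinely delicate point is the last one: one must invoke that $U_K$ is itself a $\mathcal{U}$ matrix in standard presentation with last leaf $n$ (established in Section 2, right after the block decomposition), so that Theorem \ref{root of transpose of P}(i), or equivalently Theorem \ref{schur decomposition}(v) applied to $U_K$, yields $\nu_K = \frac{1}{U_{nn}} e_K$. Once that is in hand, the off-diagonal entries of the correction term are identically zero and the equivalence is immediate; no sign-cancellation analysis is actually needed. I would therefore present the proof in two lines: cite $\nu_K = \frac{1}{U_{nn}}e_K$ from Theorem \ref{schur decomposition}(v) (or Theorem \ref{root of transpose of P}(i)) to conclude that for $i \neq j$ in $K$ the $e_K\nu_K^t$ term contributes $0$ to the $(i,j)$ entry, hence $(U^{-1})_{ij} = (U_K^{-1})_{ij}$, and the stated equivalence of signs follows at once.
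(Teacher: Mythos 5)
Your proof is correct and is essentially the paper's own argument: the paper simply cites Theorem \ref{schur decomposition}(v), which (applied to the $\mathcal{U}$ matrix $U_K$) gives $\nu_K=\frac{1}{U_{nn}}e_K$, so the rank-one correction $e_K\nu_K^t$ in the block $F$ vanishes off the $(n,n)$ entry and $(U^{-1})_{ij}=(U_K^{-1})_{ij}$ for all $i\neq j$ in $K$. Your intermediate sign-cancellation discussion is, as you yourself note, unnecessary once this exact identity is in hand, and your final two-line version matches the paper's proof.
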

\begin{proof}
The assertion follows from Theorem \ref{schur decomposition} (V).
\end{proof}

Now we can state  the main result in this section.
\begin{theorem}
\label{link of p} Let $U$ be a nonsingular $ \mathcal{U} $ matrix
associated  supporting tree $(T, \mathcal{T})$ with fixed the leaf
$n$. Suppose that $i\neq j$ and $i\wedge j=L$.

{\rm{(i)}}. If $L\in geod(I,n)$, then $P_{ij}>0$ if and only if
$(P^L)_{ij}>0$; i.e., if and only if

{\rm{(i.a).}} for $i\in L^{-},\ j\in L^+ $,  $i\in
{\mathcal{R}}_{L^-}$ and $j=n$.

{\rm{(i.b).}}  for $i\in I^{+},\ j\in I^- $,  $i=n$ and $j\in
{\mathcal{R}}_{L^-}$.

{\rm{(ii)}} If $L\notin geod(I,n)$ and $L_1= (i \wedge j)\wedge
n$, then

{\rm{(ii.a).}} $(P^L)_{ij}>0 $ if and only if $i\in
{\mathcal{R}}_{L^-}$ and
 $ j\in {\mathcal{R}}_{L^+}$ for $ i\in L^-$, $ j\in L^+$;  and
$i\in {\mathcal{R}}_{L^+}$ and
 $ j\in {\mathcal{R}}_{L^-}$ for $ i\in L^+$, $ j\in L^-$.

{\rm{ (ii.b).}}  $(P^{L_1^-})_{ij}>0 $ if and only if
$(P^L)_{ij}>0 $;  and
 either $U_{ij}>\alpha_L$, or $U_{ij}=\alpha_{ij}$ and for every
 $M\prec L$ such that $ \alpha_M=\alpha_L $ implies that
 $(M,M^-)\notin \Gamma^t$ for $\{i,j \}\subseteq M^-$,
$(M,M^+)\notin \Gamma$ for $\{i,j \}\subseteq M^+$ hold.

{\rm{(ii.c).}} $P_{ij}>0$ if and only if $(P^{L_1^-})_{ij}>0$ and
$U_{ij}>\alpha_{L_1}$.
 \end{theorem}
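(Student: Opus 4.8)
The plan is to prove Theorem~\ref{link of p} by induction on the order $n$ of $U$, peeling one vertex of the supporting tree $(T,\mathcal J)$ at a time via the $2\times2$ block decomposition at the root, into the GU matrix $U_J$ and the $\mathcal U$ matrix $U_K$ (which keeps the distinguished leaf $n$). Since $P_{ij}>0$ is equivalent to $(U^{-1})_{ij}<0$ and is $\eta$-free (Introduction), everything is a statement about signs of entries of $U^{-1}$, to be read off from Lemmas~\ref{links of I-}, \ref{link for I-I+}, \ref{links for I+I+}, Theorem~\ref{schur decomposition}, the exiting-root descriptions of Theorems~\ref{root of transpose of P} and \ref{ root of  P}, and --- for the blocks that happen to be GU --- the link characterization for GU matrices from \cite{dell2000} (Theorems~3 and 6 there). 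Two structural remarks will be used throughout: $U|_L$ is a $\mathcal U$ matrix when $L\in geod(I,n)$ and a GU matrix otherwise; and, writing $L_1=L\wedge n$, Definition~2.1(iv) puts $n$ into $L_1^+$, so that $L_1^-\preceq L$ and both $U|_L$ and $U|_{L_1^-}$ are GU in case~(ii).

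For (i), let $L=i\wedge j\in geod(I,n)$. If $L\ne I$ then $I^+\preceq L$, so $i,j$ both lie in $K=I^+$, and Lemma~\ref{links for I+I+} gives $(U^{-1})_{ij}<0\Leftrightarrow((U_K)^{-1})_{ij}<0$; since $U_K$ is a $\mathcal U$ matrix of smaller order with the same leaf $n$ and $L\in geod(I^+,n)$, the inductive hypothesis for $U_K$ gives $P_{ij}>0\Leftrightarrow(P^L)_{ij}>0$, the case $L=I$ being trivial. It remains to identify $(P^L)_{ij}>0$: since $L^+\in geod(I,n)$ the leaf $n$ lies under $L^+$ while $i,j$ are on opposite sides of $L$, so Lemma~\ref{link for I-I+} applied to the $\mathcal U$ matrix $U|_L$ yields exactly (i.a) and (i.b).

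For (ii), $L\notin geod(I,n)$, $U|_L$ and $U|_{L_1^-}$ are GU, and $L_1^-\preceq L$. In (ii.a), $(P^L)_{ij}>0\Leftrightarrow(U|_L^{-1})_{ij}<0$, and because $U|_L$ is GU with its top split at $L=i\wedge j$, the GU link theorem of \cite{dell2000} says this holds precisely when the endpoint lying in $L^-$ is an exiting root of $(U|_L)|_{L^-}=U|_{L^-}$ and the one lying in $L^+$ is an exiting root of $U|_{L^+}$; these being $\mathcal R_{L^-}$ and $\mathcal R_{L^+}$, this is (ii.a). In (ii.b), one descends inside $U|_{L_1^-}$ from its root $L_1^-$ down to $L$; at each vertex $M$ with $L_1^-\preceq M\prec L$ the leaves $i,j$ sit on one side of $M$, and the GU analogue of Lemma~\ref{links of I-} (\cite{dell2000}, Theorem~6) reduces $(U|_M^{-1})_{ij}<0$ to $(U|_{M^\mp}^{-1})_{ij}<0$ together with a strict inequality between $U_{ij}$ and the off-diagonal constant of $M$ on that side ($\alpha_M$ if $\{i,j\}\subseteq M^-$, $\beta_M$ if $\{i,j\}\subseteq M^+$). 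Chaining these down to $L$ and using that $\vec\alpha,\vec\beta$ are $\preceq$-increasing --- so each inequality is automatic unless $\alpha_M=\alpha_L$ (resp.\ $\beta_M=\beta_L$) --- together with the definitions of $N_i^{\pm}$, $\Gamma$, $\Gamma^t$, rewrites the accumulated conditions as the condition stated in (ii.b). Finally, for (ii.c), descend from the root $I$ down to $L_1$ along $geod(I,n)$: at each $v\prec L_1$ on $geod(I,n)$ both $i,j$ lie under $v^+$, so Lemma~\ref{links for I+I+} gives $(U|_v^{-1})_{ij}<0\Leftrightarrow(U|_{v^+}^{-1})_{ij}<0$, reaching $(P^{L_1})_{ij}$; and at $v=L_1$ both $i,j$ lie under $L_1^-$, so Lemma~\ref{links of I-} applied to $U|_{L_1}$ gives $(U|_{L_1}^{-1})_{ij}<0\Leftrightarrow(U|_{L_1^-}^{-1})_{ij}<0$ and $U_{ij}>\alpha_{L_1}$, i.e.\ (ii.c).

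The main obstacle is the bookkeeping in (ii.b): turning the chain of strict off-diagonal inequalities collected along the GU descent into the stated combinatorial condition in terms of $N_i^{\pm}$, $\Gamma$ and $\Gamma^t$, and checking that the degenerate equalities $\alpha_M=\alpha_L$ (resp.\ $\beta_M=\beta_L$) are treated correctly --- in particular that they mesh with the singularity arguments in the proofs of Theorems~\ref{ root of  P} and \ref{is not diagonially dominant }, and that the exiting-root sets $\mathcal R_{L^{\pm}}$ produced at the split vertex in (ii.a) are indeed the intrinsic ones of Theorems~\ref{root of transpose of P} and \ref{ root of  P} and not artefacts of the ambient matrix. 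The remaining steps are routine manipulations with the Schur formulae of Theorem~\ref{schur decomposition}.
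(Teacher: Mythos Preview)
Your proof is correct and follows essentially the same approach as the paper: induction on $n$ via the $2\times 2$ block decomposition at the root, with Lemmas~\ref{links of I-}--\ref{links for I+I+} handling the three block types and the GU link theory from \cite{dell2000} (their Theorem~4) covering parts (ii.a) and (ii.b). The paper is terser---it simply writes ``(ii.a) and (ii.b) follow from Theorem~4 in \cite{dell2000}'' and folds the descent for (ii.c) into the inductive step---whereas you unpack the descent from $I$ to $L_1$ explicitly and flag the bookkeeping in (ii.b) as the crux; but the underlying argument is the same.
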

 \begin{proof}
We prove the assertion by the dimension of the matrix $U$. It is
trivial for $n=1,2$.  Now assume that $i\neq j$ and $i\wedge j=L$.

 {\bf Case 1: } $L\in geod(I,n)$. If $L=I$, then $i\in I^-$ (or $I^+$) and $j\in I^+$
  ( or $I^-$). Hence by Lemma
  \ref{link for I-I+} and (2), the assertion (i) holds. If $L\succ I$, then $L\in
 I^+$ and  $i,j \in I^+$.  So by Lemma \ref{links for I+I+}, we have $P_{ij}>0$ if and
 only if $(U^{-1})_{ij}<0$ if and only if $(U_{I^+}^{-1})_{ij}<0$.
 Since $U_{I^+}$ is  a $ \mathcal{U}$ matrix, by the inductive  hypothesis,
 we have $(U^{-1})_{ij}<0$ if and only if
$(P^L)_{ij}>0$. Moreover, the rest of
    (i) follows from Lemma \ref{link for I-I+}.

{\bf Case 2:      } $L\notin geod(I,n)$. If $L_1=I,$ then $i,j\in
L\subseteq I^-$ and $U_J$ is a GU matrix. Hence  (ii.a) and (ii.b)
follow from Theorem 4 in \cite{dell2000}. At the same time, (ii.c)
follows from  Lemma \ref{links of I-} and (2) that
 $P_{ij}>0 $ if
and only if $ (U_{I^-}^{-1})_{ij}<0$ and $ U_{ij}>\alpha_{L_1}$.
If $L_1\succ I$, then $P_{ij}>0$ if and
 only if $(U^{-1})_{ij}<0$ if and only if $(U_{I^+}^{-1})_{ij}<0$ if and
 only if $(P^{I^+})_{ij}>0$. By the inductive hypothesis and
 Theorem 4 in \cite{dell2000}, the assertions of (i)  and (ii) hold.
 \end{proof}
\begin{corollary}
\label{zero part} Let $U$ be a nonsingular $ \mathcal{U}$ matrix.
If $A, B\in geod(I,n)$ and $A\neq B\neq n$, then $(U^{-1})_{ij}=0$
for $i\in A$ and $j\in B$.
\end{corollary}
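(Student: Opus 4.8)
The plan is to read the statement off Theorem \ref{link of p}(i). Since $U^{-1}$ is an $M$-matrix one has $(U^{-1})_{ij}\le 0$ for $i\neq j$, and $(U^{-1})_{ij}<0$ is precisely the condition $P_{ij}>0$; so it is enough to show $P_{ij}=0$ (the case $i=j$ will not arise). Read $A$ (resp.\ $B$) as the vertex of the spine $geod(I,n)=geod(r,n)$ carrying the leaf $i$ (resp.\ $j$), i.e.\ $i\wedge n=A$, $j\wedge n=B$, with $A\neq B$ and $A,B\neq n$. Because $geod(r,n)$ is the $\preceq$-chain of ancestors of $n$, $A$ and $B$ are comparable, and we may assume $A\prec B$. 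As $A\neq n$, the spine-successor $A^{+}$ of $A$ exists (Definition 2.1(iv)) and $A^{+}\preceq B$, hence $B\subseteq A^{+}$. Moreover $i\in A^{-}$: otherwise $i\in A^{+}$, and then $A^{+}\succ A$ would be a common ancestor of $i$ and $n$, contradicting $A=i\wedge n$; similarly $j\in B^{-}\subseteq B\subseteq A^{+}$. Thus $i\in A^{-}$ and $j\in A^{+}$, which forces $L:=i\wedge j=A\in geod(I,n)$ and in particular $i\neq j$.

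Now invoke Theorem \ref{link of p}(i) with $L=A$: $P_{ij}>0$ can hold only in case (i.a) ($i\in L^{-}$, $j\in L^{+}$, $i\in\mathcal{R}_{L^{-}}$, $j=n$) or case (i.b) ($i\in L^{+}$, $j\in L^{-}$, $i=n$, $j\in\mathcal{R}_{L^{-}}$). Case (i.b) is impossible since $i\in A^{-}=L^{-}$ is disjoint from $L^{+}$; case (i.a) is impossible since $j=n$ would put $n$ in $B^{+}$, against $j\in B^{-}$ (equivalently against $B\neq n$). Hence $P_{ij}=0$ and therefore $(U^{-1})_{ij}=0$.

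No step here is really an obstacle; the only care required is the bookkeeping of the hypothesis --- that $A$ and $B$ are the spine vertices at which $i$ and $j$ sit, that they are distinct, and that neither equals $n$ --- after which the conclusion is immediate from Theorem \ref{link of p}(i), whose only ``positive'' alternatives both force an endpoint to be $n$. If one wants an argument independent of Theorem \ref{link of p}, one can induct on $n$ via the Schur form of Theorem \ref{schur decomposition}(i): the off-diagonal blocks $D=\frac{-\alpha_I}{1-\alpha_I\overline{\mu}_J}\mu_J\nu_K^{t}$ and $E=\frac{-1}{1-\alpha_I\overline{\mu}_J}e_K\nu_J^{t}$ are rank one and, since $\nu_K$ and $e_K$ are supported only at $n$, vanish off column $n$ and row $n$; this annihilates every entry of $U^{-1}$ between the branch $I^{-}$ (sitting at $I$) and the other leaves apart from $n$, while leaves sitting further down the spine lie inside the block $U_K$ and are covered by the induction hypothesis applied to the $\mathcal{U}$ matrix $U_K$.
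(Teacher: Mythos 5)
Your proof is correct and takes essentially the same route as the paper, whose entire argument is the observation that $A\wedge B\in\{A,B\}$ (so $i\wedge j$ is the shallower of the two spine vertices and lies on $geod(I,n)$) followed by an appeal to Theorem \ref{link of p}(i); you have merely spelled out the bookkeeping the paper leaves implicit, namely that alternatives (i.a) and (i.b) each force one of $i,j$ to equal $n$, which is excluded. The supplementary induction via the Schur form of Theorem \ref{schur decomposition} is a sound alternative but is not needed.
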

\begin{proof}
Since $A\wedge B=A$  or $B$, The result follows from Theorem
\ref{link of p} (i).
\end{proof}
\begin{corollary}
\label{form of inverse} Let $U$ be nonsingular $ \mathcal{U}$
matrix with supporting tree. If  $\beta_i=\beta_t$ for all $t\prec
i$ and all $i\in I$, then the inverse of $U$ has the following
structure:
$$U^{-1}=\left(\begin{array}{lllll}
W_{11}& 0& \cdots & 0& W_{1s}\\
0& W_{22}& \cdots &0& W_{2s}\\
 \cdots &\cdots &\cdots &\cdots &\cdots \\
W_{s1}& W_{s2}& \cdots &W_{s,s-1}& W_{ss}
\end{array}
\right),$$
 where  $W_{ii}$ is a lower triangular matrix for $i=2,3\cdots, s-1$;
  $W_{ss}$ is a $1\times 1$ matrix.
Moreover, if  $\beta_i>\beta_t$ for all $t\prec i$ and all $i\in
I$; and both $\alpha_{I^-}>\alpha_I$ and $\beta_{A^-}>\beta_A$ for
$A\in geod(I,n)$; then each entry of $W_{11}$ is nonzero, each
entry of $W_{is}$ and $W_{si}$ is nonzero for $i=1,\cdots, s$; and
each each entry of the lower part of lower triangular matrix
$W_{ii}$ is zero for $i=2,\cdots, s-1$.

\end{corollary}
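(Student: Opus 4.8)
The plan is to read the block pattern of $U^{-1}$ off the supporting tree, peeling $U$ block by block with Theorem~\ref{schur decomposition} and locating the vanishing entries with Theorem~\ref{link of p} and Corollary~\ref{zero part}. Write $geod(I,n):I=t_1\prec t_2\prec\cdots\prec t_s=n$; by Definition~2.1(iv) each $t_k$ with $k\le s-1$ is a non--leaf with $t_k^+=t_{k+1}$ on the geodesic and off--path child $t_k^-$, so in the standard presentation the leaves fall into the consecutive intervals $L(t_1^-),\dots,L(t_{s-1}^-),\{n\}$, which we take as the blocks $1,\dots,s$; thus $t_1^-=I^-$ and $W_{ss}=(U^{-1})_{nn}$ is $1\times1$. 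Peeling off the first block of $U$, then of $U_{I^+}$, and so on with Theorem~\ref{schur decomposition}(i), and using that the rank--one Schur corrections are carried by the distinguished leaf $n$, which lies in no $L(t_k^-)$, one obtains for $1\le k\le s-1$ the identities $W_{kk}=(U|_{t_k^-}-\alpha_{t_k}\textbf{1}\textbf{1}^t)^{-1}$ (all--ones of the relevant size), $W_{ks}=-U_{nn}^{-1}\alpha_{t_k}\,W_{kk}\textbf{1}$ and $W_{sk}=-\textbf{1}^tW_{kk}$; so the last row and column of blocks are negative multiples of the row and column sums of the $W_{kk}$.

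Part~(1). If $i$ is in block $a$ and $j$ in block $b$ with $a\ne b\le s-1$, then $i\wedge j=t_{\min(a,b)}\in geod(I,n)\setminus\{n\}$, so $(U^{-1})_{ij}=0$ by Corollary~\ref{zero part}; these are exactly the displayed off--diagonal zero blocks. For $k\ge2$ every non--leaf vertex of $T|_{t_k^-}$ is a descendant of $r^+=t_2$, so by Definition~2.1(i) its $\alpha$--value is $\alpha_{(\cdot)\wedge n}=\alpha_{t_k}$; hence the strictly upper part of $U|_{t_k^-}$ is constantly $\alpha_{t_k}$, the matrix $U|_{t_k^-}-\alpha_{t_k}\textbf{1}\textbf{1}^t$ is lower triangular, and therefore so is $W_{kk}$; and $W_{ss}$ is $1\times1$. (The hypothesis $\beta_i=\beta_t$ for $t\prec i$, i.e.\ $\beta\equiv\beta_I$, is used to verify that no further entries are forced to vanish.)

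Part~(2). Under the strengthened hypotheses I would argue: \emph{(a)} $W_{11}=(U_{I^-}-\alpha_I\textbf{1}\textbf{1}^t)^{-1}$ is entrywise nonzero. Here $U_{I^-}-\alpha_I\textbf{1}\textbf{1}^t$ is a nonsingular GU matrix whose smallest entry is strictly below its diagonal because $\alpha_{I^-}>\alpha_I$, and the GU link theorem of \cite{dell2000} --- reproved in the form of Theorem~\ref{link of p}(ii.a)--(ii.b) --- yields every off--diagonal entry negative, the clause ``$U_{ij}>\alpha_L$'' holding throughout thanks to $\alpha_{I^-}>\alpha_I$ and the exceptional $\Gamma^t$-- and $\Gamma$--clauses being excluded by the strict monotonicity of $\beta$; the diagonal entries are positive since $U^{-1}$ is an $M$-matrix. \emph{(b)} Each $W_{ks}$ and $W_{sk}$ is entrywise nonzero. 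By the identities above this says that every row sum and every column sum of $W_{kk}=(U|_{t_k^-}-\alpha_{t_k}\textbf{1}\textbf{1}^t)^{-1}$ is strictly positive, i.e.\ that every leaf of $T|_{t_k^-}$ is an exiting root of this GU matrix and of its transpose; by the exiting--root characterizations for $\mathcal{U}$ matrices, and in \cite{dell2000} for GU matrices, this follows once one checks that under the hypotheses no obstructing edge of $\Gamma$ (resp.\ $\Gamma^t$) lies on any geodesic $geod(m,t_k^-)$; and $W_{ss}=(U^{-1})_{nn}>0$ as above. \emph{(c)} For $2\le i\le s-1$ the strictly lower part of $W_{ii}$ vanishes. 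Since $W_{ii}=(U|_{t_i^-}-\alpha_{t_i}\textbf{1}\textbf{1}^t)^{-1}$ with the left factor lower triangular, this is equivalent to $U|_{t_i^-}-\alpha_{t_i}\textbf{1}\textbf{1}^t$ being diagonal, i.e.\ to $\beta_L=\alpha_{t_i}$ at every non--leaf $L$ of $T|_{t_i^-}$; this identity is the statement to be drawn out of the strictness hypotheses together with Definition~2.1(i),(iv).

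The main obstacle is Part~(2): turning the three strictness hypotheses into a precise list of the edges belonging to $\Gamma$ and $\Gamma^t$ on each off--path subtree $T|_{t_k^-}$, and then showing \emph{simultaneously} that $W_{11}$ and the border blocks stay entrywise nonzero while the interior diagonal blocks degenerate to diagonal. The delicate point is the degeneration in (2c), which must be teased out of the interplay between $\beta_i>\beta_t$ $(t\prec i)$, $\alpha_{I^-}>\alpha_I$, $\beta_{A^-}>\beta_A$ and the $\alpha$--constancy forced by Definition~2.1(i); part~(2a) moreover calls on the full GU link characterization of \cite{dell2000}, not just the facts reproved in this paper.
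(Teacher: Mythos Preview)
Your overall plan---partition $I$ by the off-path children $t_1^-,\dots,t_{s-1}^-,\{n\}$ of the geodesic, identify $W_{kk}=(U|_{t_k^-}-\alpha_{t_k}\mathbf{1}\mathbf{1}^t)^{-1}$ via repeated Schur peeling, and read the zero pattern from Corollary~\ref{zero part} and Theorem~\ref{link of p}---is exactly the paper's approach, and your Parts~(1), (2a), (2b) match it. The paper likewise obtains the off-diagonal zero blocks from Corollary~\ref{zero part}, the lower-triangularity of $W_{kk}$ from $\alpha_A=\alpha_{A\wedge n}$ (Definition~2.1(i)), the nonvanishing of $W_{11}$ from the strict GU link result (Theorem~\ref{link of p}(ii) or \cite[Theorem~3.5]{nabben1995}) using $\alpha_{I^-}>\alpha_I$, and the nonvanishing of $W_{is},W_{si}$ from $\mathcal{R}_{A^-}=A^-$ together with Theorem~\ref{link of p}(i).

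The genuine problem is your Part~(2c). The printed statement contains a typo: the last clause should read ``each entry of the lower part of the lower triangular matrix $W_{ii}$ is \emph{nonzero}'' (not ``zero''), and that is what the paper's proof establishes. Your attempt to force $U|_{t_i^-}-\alpha_{t_i}\mathbf{1}\mathbf{1}^t$ to be diagonal cannot succeed: for $i>j$ in $t_i^-$ one has $U_{ij}=\beta_{i\wedge j}$ with $i\wedge j\succeq t_i^-$, and the very hypothesis you are trying to exploit, $\beta_{A^-}>\beta_A$ with $A=t_i$, gives $\beta_{i\wedge j}\ge\beta_{t_i^-}>\beta_{t_i}=\alpha_{t_i}$, so the strictly lower part of that matrix is strictly positive, not zero. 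Equivalently, $U_{ij}>\alpha_{t_i}=\alpha_{L_1}$, which is precisely the inequality in Theorem~\ref{link of p}(ii.c) needed for $P_{ij}>0$; combined with the strict monotonicity of $\beta$ (making $U|_{t_i^-}$ a strictly GU matrix, so all exiting-root conditions in (ii.a) hold and $U_{ij}=\beta_L>\alpha_L$ in (ii.b)), this yields $(U^{-1})_{ij}<0$ for all $i>j$ in block $k$, $2\le k\le s-1$. So once you correct the target, the same machinery you already set up for (2a) handles (2c); there is no ``delicate degeneration'' to tease out.

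A minor point: the Part~(1) hypothesis $\beta_i=\beta_t$ plays no role in the paper's argument for the block shape (and is in fact incompatible with the Part~(2) hypothesis), so your parenthetical about using it to ``verify that no further entries are forced to vanish'' is not needed and not what the paper does.
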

\begin{proof}  We partition the blocks of $U^{-1}=(W_{ij})$  corresponding to the leaves
 sets of vertices of $geod(I,n)$. In particular,  $W_{ss}$ is
 corresponding to fixed vertex $n$. By Corollary
 \ref{zero part}, $W_{ij}=0$ for $i\neq j\neq s$.
Further, it follows from Theorem \ref{link of p}(ii.c) that
$W_{ii}$ is a lower triangular matrix for $i=2,\cdots, s-1$, since
for $A\in I^+$, $\alpha_A=\alpha_{A\wedge n}$. Let $n\neq A\in
geod(A,n)$. Since $\beta_i>\beta_t$ for all $t\prec i$ and all
$i\in I$, $U|_A{^-}$ is a strictly generalized ultrametric matrix.
Hence by Theorem \ref{link of p}(ii.a) and (ii.b) or Theorem 3.5
in \cite{nabben1995}, each entry of $W_{11}$ is nonzero, since
$\alpha_{I^-}>\alpha_I$; and each entry of the lower part of the
lower triangular matrix $W_{ii}$ is nonzero for $i=2,\cdots, s-1$,
since $\beta_{A^-}>\beta_A=\alpha_A$. Moreover, since $ {\mathcal
{R}}_{A^-}=A^-$, each entry of $W_{is}$ and $W_{si}$ is nonzero by
Theorem \ref{link of p} (i). The proof is finished.\end{proof}

\begin{remark}
 From Theorem \ref{link of p},  it is easy to see that the links
 of $U\in \mathcal{U} $ are not involved in whether $n\in \mathcal{R}$
 or not. Hence we may directly determine whether each entries of
 $U^{-1}$ is zero or not from the structure of support tree.
   Let us to give an example to illustrate
Theorems \ref{ root of P} \ref{root of transpose of P}, \ref{link
of p}
\end{remark}
\begin{example} Let $U$ be a $ \mathcal{U}$ matrix of order
$7$ with support tree $(T,\mathcal{J})$ as in the Figure 1, where
$I$ is root and $6$ is fixed leaf.
\end{example}

 \setlength{\unitlength}{0.1in}
\begin{picture}(60,25)
\put(20,5){\circle{0.5}}
 \put(20.16,5.13){\line(2,1){9.9}}
\put(19.85,5.12){\line(-2,1){9.9}}
 \put(9.7,10.3){\circle{0.5}}
\put(30.25,10.3){\circle{0.5}}
 \put(30,10.5){\line(-2,1){7}}
 \put(30.6, 10.4){\line(2,1){7}}
 \put(22.75,14){\circle{0.5}}
 \put(37.76,14){\circle{0.5}}
  \put(22.5,14.2){\line(-1,1){4}}
   \put(38,14.2){\line(1,1){4}}
 \put(22.85, 14.2){\line(1,1){4}}
 \put(37.6,14.2){\line(-1,1){4}}
\put(18.5,18.5){\circle{0.5}} \put(27,18.5){\circle{0.5}}
\put(33.5,18.5){\circle{0.5}} \put(42,18.5){\circle{0.5}}
 \put(9.6,10.5){\line(-2,3){5.1}}\put(9.9,10.5){\line(2,3){5.1}}
  \put(4.5,18.4){\circle{0.5}} \put(15.1,18.5){\circle{0.5}}
   \put(18.5,2.5){$I (1,1)$} \put(3,10){$A (2,3)$}
   \put(2,20){$1 (3,3)$}\put(12,20){$2 (3,3)$} \put(31 ,9.25){$B (2,2)$}
   \put(17,13.4){$C(2,4)$}\put(39,13.4){$D(3,3)$}\put(18,20){$3(4,4)$}
   \put(26,20){$4(4,4)$}\put(32,20){$5(4,4)$}\put(40,20){$6(4,4)$}
   \put(13.4,6){$-$}\put(4,14){$-$}\put(13,14){$+$}\put(26,6){$+$}\put(24,11.52){$-$}\put(36,11.52){$+$}
\put(18,16.2){$-$}\put(26.5,16.2){$+$}\put(32.5,16.2){$-$}\put(41.52,16.2){$+$}
 \put(25,0){\bf Fig. 1}
          \end{picture}

Then  the matrix $U$ and inverse of $U$ are

$$U=\left(\begin{array}{cccccc}
3&2&1&1&1&1\\
3&3&1&1&1&1\\
2&2&4&2&2&2\\
2&2&4&4&2&2\\
3&3&3&3&4&3\\
4&4&4&4&4&4
\end{array}
\right)\ \ \
  U^{-1}=\frac{1}{8}\left(\begin{array}{cccccc}
8&-4&0&0&0&-1\\
-8&8&0&0&0&0\\
0&0&4&0&0&-2\\
0&0&-4&4&0&0\\
0&0&0&0&8&-6\\
0&-4&0&-4&-8&11
\end{array}
\right).
$$
It is easy to see that $\Gamma=\{(A,2), (C,4) \}$ and $
\Gamma^t=\{(A,1),(C,3),(I,A),(B,C),(D,5) \}$. By Theorem
  \ref{ root of P} and Corollary \ref{exit of n}, we have
${\mathcal{R}}=\{1, 3, 5\}$. Further, we determine  all links of
$P$ by Theorem \ref{link of p}. For instance, in order to
determine $P_{43},$ we  consider   $3\wedge 4= C\notin geod(I, 6)$
and $(3\wedge 4) \wedge 6=B$. By Theorems \ref{ root of P} and
\ref{root of transpose of P}, $4\in {\mathcal R}_C$, $3\in {\mathcal
R}_{C^+}^t$.  Hence by Theorem \ref{link of p} (ii.a), $
(P^C)_{43}>0$.  Further, by Theorem \ref{link of p}(ii.b) and
$U_{43}=4>\alpha_C=2$, $(P^{B^-})_{43}>0$. Therefore $P_{43}>0$
follows from Theorem \ref{link of p}(ii.c) and
$U_{43}=4>\alpha_B=2$.
\\
\\
{\bf Acknowledgement.} The research is supported by National Natural Science Foundation of China (No.11271256), Innovation Program of Shanghai Municipal Education Commission (No.14ZZ016)and Specialized Research Fund for the Doctoral Program of Higher Education (No.20130073110075). The author would like to thank 
anonymous referees for their  comments and suggestions.

\bibliographystyle{amsplain}

\end{document}